\newcommand{\llncs}[1]{} 
\newcommand{\addllncs}[1]{#1}
\newcommand{\addllncswa}[1]{#1}
\newcommand{\addllncswoa}[1]{}
\newtheorem{theo}{Theorem}
\numberwithin{theo}{section}
\newtheorem{thm}[theo]{Theorem}
\newtheorem{lem}[theo]{Lemma}
\newtheorem{prop}[theo]{Proposition}
\newtheorem{Def}[theo]{Definition}
{\medbreak}
\newcommand{\ds}{\displaystyle}
\def\Nir{N_i^{\mathrm{right}}}
\def\Nil{N_i^{\mathrm{left}}}
\newcommand{\cacher}[1]{}
\def\cA{\mathcal{A}}
\def\cB{\mathcal{B}}
\def\cO{\mathcal{O}}
\def\cW{\mathcal{W}}
\def\cL{\mathcal{L}}
\def\bL{\mathbf{L}}
\def\bO{\mathbf{O}}
\def\bW{\mathbf{W}}
\newcommand{\de}{\delta}
\newcommand{\ov}[1]{\overline{#1}}
\newcommand{\fig}[3]{\begin{figure}[h!]\begin{center}\includegraphics[#1]{#2.pdf}\end{center}\caption{#3}\label{fig:#2}\end{figure}}
\def\set5{[1:5]}
\def\omt{\omega_3}
\def\omf{\omega_5}
\def\bdelta{\boldsymbol{\delta}}
\def\ove{\overrightarrow}
\newcommand{\cwjump}{\textrm{cw-jump}}
\newcommand{\ccwjump}{\textrm{ccw-jump}}
\newcommand{\Hd}{H^\diamond}
\newcommand{\Gd}{G^\diamond}
\renewcommand{\orcidID}[1]{\href{https://orcid.org/#1}{\includegraphics[scale=.03]{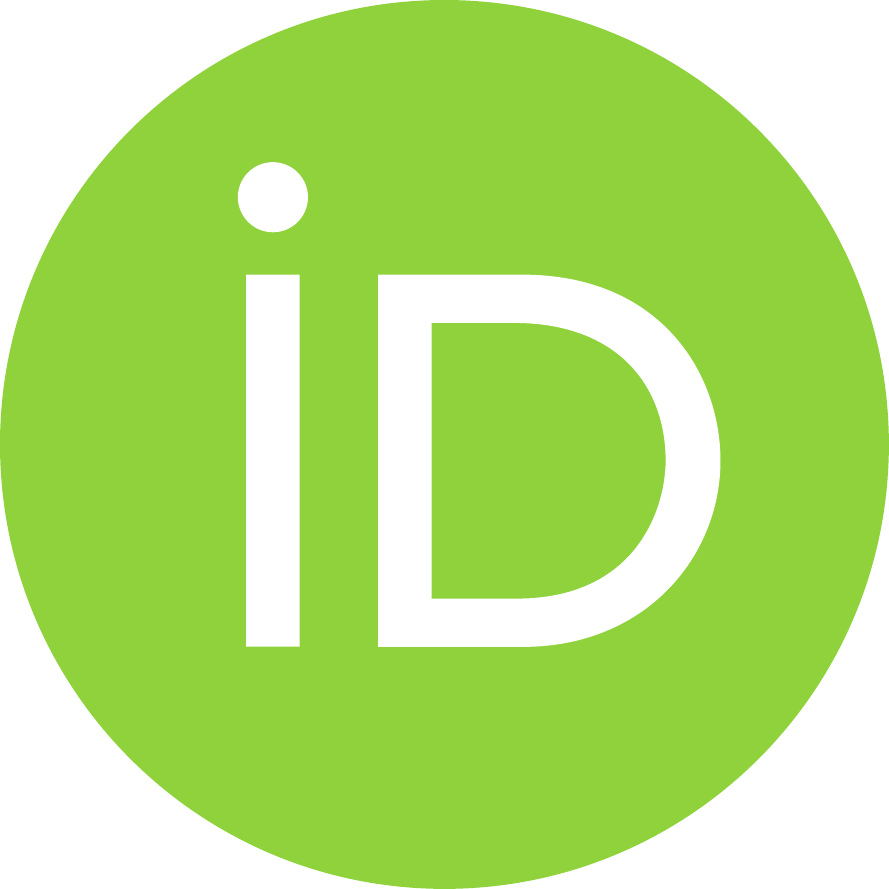}}}
\begin{document}

\author{Olivier Bernardi\inst{1}\orcidID{0000-0003-3231-7152} \and
	\'Eric Fusy\inst{2}\orcidID{0009-0000-6517-2231} \and
	Shizhe Liang\inst{1}\orcidID{0009-0004-2611-1451}
}
%

%
\institute{Department of Mathematics, Brandeis University, Waltham MA, USA\\
	\email{\{bernardi,shizhe1011\}@brandeis.edu} \and
	CNRS/LIGM, Universit\'e Gustave Eiffel, Champs-sur-Marne, France\\
	\email{eric.fusy@univ-eiffel.fr}}


\title{A Schnyder-type drawing algorithm for 5-connected triangulations\thanks{OB was partially supported by NSF Grant DMS-2154242. EF was partially supported by the project ANR19-CE48-011-01 (COMBIN\'E), and the project ANR-20-CE48-0018 (3DMaps).}}
\date{\today}

\maketitle

\begin{abstract}
We define some Schnyder-type combinatorial structures on a class of planar triangulations of the pentagon which are closely related to 5-connected triangulations.
The combinatorial structures have three incarnations defined in terms of orientations, corner-labelings, and woods respectively.  
The wood incarnation consists in 5 spanning trees crossing each other in an orderly fashion. 
Similarly as for Schnyder woods on triangulations, it induces, for each vertex, a partition of the inner triangles into face-connected regions (5~regions here).
We show that the induced barycentric vertex-placement, where each vertex is at the barycenter of the 5 outer vertices with weights given by the number of faces in each region,  yields a planar straight-line drawing.
\keywords{Schnyder structures, barycentric drawing, 5-connected triangulations}
\end{abstract}

\section{Introduction}\label{sec:intro}

In 1989, Walter Schnyder showed that planar triangulations can be endowed with remarkable combinatorial structures, which now go by the name of \emph{Schnyder woods}~\cite{Schnyder:wood1}. A Schnyder wood of a planar triangulation 
is a partition of its inner edges into three trees, crossing each other in an orderly manner; see Figure~\ref{fig:triangulation2}(a) for an example. In~\cite{Schnyder:wood2}, Schnyder used his structures to define an elegant algorithm to draw planar triangulations with straight edges~\cite{Schnyder:wood2}.
\llncs{The idea is to associate to each vertex $v$ three regions that are delimited by the three paths from $v$ to the root in each of the trees, and to place $v$ at the barycenter of the three fixed outer vertices, with weights given by the proportion of inner faces in each region.}

\fig{width=\linewidth}{triangulation2}{
\addllncs{(a) A Schnyder wood (with the local condition at inner vertices), (b) the corresponding corner labeling, (c) the corresponding 3-orientation.}
\llncs{(a) A Schnyder wood of a triangulation $G$, where the three trees are indicated by three colors.
A Schnyder wood of $G$ is a partition of the inner edges of $G$ into three trees $W_1$, $W_2$, $W_3$ satisfying two conditions: 
(1) 
 for all $i\in\{1,2,3\}$, the tree $W_i$ spans all the inner vertices and the outer vertex $v_i$ 
(2) if each tree $W_i$ is oriented toward its root $v_i$, then in clockwise direction around each inner vertex one has: the outgoing edge of $W_1$, the ingoing edges of $W_3$, the outgoing edge of $W_2$, the ingoing edges of $W_1$, the outgoing edge of $W_3$, and finally the ingoing edges of $W_2$. 
(b) Encoding the Schnyder wood by a corner labeling. (c) Encoding the Schnyder wood by a 3-orientation (orientation of the inner edges such that every inner vertex has outdegree 3).}}

In this article we study an analogue of Schnyder woods for triangulations of the pentagon, which we call \emph{5c-woods}, and we show that 
such structures exist if and only if cycles of length less than 5 have no vertex in their interior (a property closely related to 5-connectedness).  
We then use these structures to define a graph-drawing algorithm in the spirit of Schnyder's \llncs{barycentric }algorithm. 
\llncs{A 5c-wood 
consists in five trees crossing each other in an orderly manner. The five trees yield, for each vertex $v$, five paths that delimit five regions. In our algorithm, the outer vertices are placed so as to form a regular pentagon, and each inner vertex $v$ is placed at the barycenter of the outer vertices, with weights given by the proportions of faces in each region. Our algorithm can be applied to draw 5-connected triangulations, upon deleting one of its vertices of degree 5, considered as a vertex at infinity.} 

A disadvantage of our algorithm compared to Schnyder's original algorithm (and to algorithms such as~\cite{bonichon2007convex,chrobak1997convex,FraysseixPP88,F01,Fu07b,he1997grid,miura2022grid} using an underlying combinatorial structure or shelling order to define a vertex-placement) is that it does not yield a \emph{grid-drawing} algorithm, that is, a vertex placement where the coordinates are integers bounded by a linear function of the number of vertices of the graph.
Nevertheless, we stress some nice features of our algorithm: it can be implemented in linear time, 
it respects rotational symmetries, 
and\llncs{, as we will show,} 
the worst-case vertex resolution (minimal distance between vertices) is better than in Schnyder's drawing. 
On the examples we have tested, our algorithm seems to output aesthetically pleasant drawings; see Figure~\ref{fig:5connbig_drawing} for an example.

Our article is organized as follows.
All the combinatorial results are presented in Section~\ref{sec:combin}. 
\llncs{We start by defining the 5c-structures for triangulations of the pentagon: we give three different incarnations of these structures (5c-woods, 5c-labelings, 5c-orientations) and show their equivalence; see Figure~\ref{fig:5c_structures} for an example (the three incarnations parallel those for Schnyder structures shown in Figure~\ref{fig:triangulation2}). 
We then show that a triangulation of the pentagon admit a 5c-wood if and only if its cycles of length less than 5 have no vertex in their interior, and we provide a linear algorithm for constructing a 5c-wood in this case.}
\addllncs{We start by defining the 5c-structures for triangulations of the pentagon (see Figure~\ref{fig:5c_structures} for an example, which parallels Figure~\ref{fig:triangulation2}). We give three different incarnations of these structures (5c-woods, 5c-labelings, 5c-orientations), we explain the equivalence between them, state the exact condition for their existence,} \addllncswa{and point to the appendix for detailed proofs and the description of a linear-time construction algorithm.}\addllncswoa{and point to the appendix of the full version~\cite{OB-EF-SL:5c-Schnyder} for detailed proofs and the description of a linear-time construction algorithm.}
The drawing algorithm is then presented in Section~\ref{sec:algo}. Together with the proof of planarity of the drawing\llncs{, and of the linear time complexity}, this section includes a discussion of the drawing properties mentioned above, and some open questions.

Let us mention that the 5c-structures presented here are closely related to the \emph{quasi-Schnyder structures} 
which we introduced in~\cite{OB-EF-SL:Grand-Schnyder}.  
The quasi-Schnyder structures are a far-reaching generalization of Schnyder woods (encompassing regular edge labelings~\cite{He93:reg-edge-labeling},
 and Schnyder decompositions considered in~\cite{OB-EF:Schnyder}), 
and in the case of triangulations of the pentagon, these structures can be identified with the 5c-structures which are the focus of the present article. Focusing on triangulations of the pentagon allows us to provide a simplified presentation, both in terms of definitions and proofs, and also to define an additional incarnation in terms of woods. Let us add that the quasi-Schnyder structures for triangulations of the square coincide (after simplifications) with \emph{regular edge labelings}~\cite{He93:reg-edge-labeling} (a.k.a. \emph{transversal structures}~\cite{Fu07b}), and the quasi-Schnyder structures for triangulations \llncs{(of the triangle) }coincide (after simplifications) with the classical Schnyder woods~\cite{Schnyder:wood1}. 
Another combinatorial structure which bears some resemblance to 5c-structures are the five color forests introduced in~\cite{FeScSt18} that are related
to pentagon contact representations. As we will explain in Remark~\ref{rk:pentagons}, one can easily construct a  five color forest from a 5c-structure (but the opposite is not true). 

\fig{width=\linewidth}{5c_structures}{The three incarnations of \addllncs{5c-structures.}\llncs{Schnyder-like structures for triangulations of the pentagon.} 
(a) A 5c-wood $\cW=(W_1,\ldots,W_5)$. (b) The 5c-labeling \llncs{$\cL=\ov \Theta(\cW)$}\addllncs{$\cL=\Theta^{-1}\!(\cW)$}. (c) The 5c-orientation $\cO=\Phi(\cL)$.}



\section{Schnyder structures for triangulations of the pentagon}\label{sec:combin}

\subsection{Definitions about triangulations}
\llncs{A \emph{plane map} is a connected graph $G$ embedded in the plane without edge crossing.} 
\addllncs{A \emph{plane map} is a connected planar graph $G$ with a fixed planar embedding.}
The \emph{faces} of $G$ are the connected components of $\mathbb{R}^2\backslash G$. The \emph{outer face} is the unique unbounded face, the other faces are called \emph{inner}. An \emph{arc} of $G$ is an edge with a choice of direction.
We use the notation $\{u,v\}$ for an edge connecting the vertices $u$ and $v$, and $(u,v)$ for an arc oriented from $u$ to $v$. The two arcs on an edge are called \emph{opposite}. 
 A \emph{corner} is a sector delimited by two consecutive edges around a vertex. Corners, vertices, edges and arcs are called \emph{outer} when they 
are incident to the outer face, and \emph{inner} otherwise.  

A \emph{triangulation of the pentagon}, or \emph{5-triangulation} for short, is a plane map such that the inner faces have degree 3 and the outer face contour is a simple cycle of length 5. The outer vertices of a 5-triangulation are denoted by $v_1,v_2,\ldots,v_5$ in clockwise order around the outer face; see Figure~\ref{fig:5c_structures}.  
A \emph{5c-triangulation} is a 5-triangulation such that every cycle with at least one vertex in its interior has length at least 5.
Note that 5c-triangulations have no loops nor multiple edges. In the terminology of~\cite{OB-EF-SL:Grand-Schnyder}, 5c-triangulations are the 5-triangulations which are \emph{quasi 5-adapted}. It is easy to check that a 5-triangulation $G$ is a 5c-triangulation if and only if the graph $G'$ obtained from $G$ by adding the edges $\{v_i,v_{i+2}\}$ for all $i\in \{1,2,\ldots,5\}$ is 5-connected (in particular 5-connected 5-triangulations are 5c-triangulations). Let us mention lastly that deleting a vertex of degree 5 from a 5-connected planar triangulation (such a vertex necessarily exists, by the Euler relation) yields a 5c-triangulation; hence the algorithm presented in Section~\ref{sec:algo} for 5c-triangulations gives a way to draw 5-connected planar triangulations (upon seeing the deleted vertex as a vertex at infinity). 
The \emph{primal-dual completion} of a plane map $G$ is the map $G^+$ obtained by inserting a vertex $v_e$ in the middle of each edge $e$, inserting a vertex $v_f$ in each inner face $f$, and then connecting $v_f$ to all edge-vertices corresponding to the edges incident to $f$. An example is shown in Figure~\ref{fig:5c_structures}(c). The vertices of $G^+$ corresponding to faces (resp. edges) of $G$ are called \emph{dual vertices} (resp. \emph{edge-vertices}), while the original vertices of $G$ are called \emph{primal vertices}.

\subsection{Three incarnations of Schnyder structures on 5-triangulations}\label{sec:incarnations}



In this section we present three different incarnations of Schnyder-type structures on 5-triangulations and define bijections between \addllncs{them}\llncs{these incarnations}. The incarnations are called \emph{5c-woods}, \emph{5c-labelings} and \emph{5c-orientations} respectively. The conditions defining these structures are indicated in Figure~\ref
{fig:def-incarnations}. We start by discussing 5c-orientations, an example of which is presented in Figure~\ref{fig:5c_structures}(c).

\begin{Def}\label{def:orient}
Given a 5-triangulation $G$, a \emph{5c-orientation} of $G$ is an orientation of the inner edges of the primal-dual completion $G^+$ of $G$ \llncs{such that the following conditions are satisfied}\addllncs{satisfying the following conditions}:
\begin{enumerate}
\item[(O0)]
The outer primal vertices have outdegree~$0$.
\item[(O1)]
The inner primal vertices have outdegree~$5$, the dual vertices have outdegree~$2$, and the edge-vertices (including the outer ones) have outdegree $1$.
\end{enumerate}
\end{Def}

The definition of 5c-orientations is illustrated in the top row of Figure~\ref{fig:def-incarnations}.

\fig{width=\linewidth}{def-incarnations}{Definition of 5c-structures. Top row: Conditions defining 5c-orientations. Middle row: Conditions defining 5c-labelings. Bottom row: Conditions defining 5c-woods.}

Next, we define \emph{5c-labelings}, an example of which is presented in Figure~\ref{fig:5c_structures}(b). A \emph{corner labeling} of a 5-triangulation $G$ is an assignment of a \emph{label} in $[1:5]:=\{1,2,3,4,5\}$ to each inner corner of $G$. 
For two corners $c$ and $c'$ with labels $i$ and $i'$ respectively, we call \emph{label jump} from $c$ to $c'$ the integer $\de \in \{0,1,2,3,4\}$ such that $i + \de \equiv i'\pmod 5$.

\begin{Def}\label{def:label}
Given a 5-triangulation $G$, a \emph{5c-labeling} of $G$ is a corner labeling of $G$ satisfying the following conditions:
\begin{enumerate}
\item[(L0)]
For all $i\in[1:5]$, every inner corner incident to $v_i$ has label $i$.
\item[(L1)]
Around every inner vertex, the incident corners form 5 non-empty intervals $I_1,I_2,I_3,I_4,I_5$ in clockwise order, with all corners in $I_i$ having label $i$ \llncs{for $i\in[1:5]$}. 
\item[(L2)]
Around every inner face, in clockwise order, there are two label jumps equal to $2$ and one label jump equal to $1$. 
\end{enumerate}
\end{Def}

Conditions (L0-L2) are illustrated in Figure~\ref{fig:def-incarnations} (middle row).
\llncs{Note that Conditions (L1) and (L2) can be reformulated as follows. 
\begin{compactenum}
\item[1.] For every inner vertex $v$ each label jump between consecutive corners in clockwise order around $v$ is is most 1, and their sum is 5.
\item[2.] For every inner face $f$ each label jump between consecutive corners in clockwise order around $f$ is at most 2, and their sum is 5.
\end{compactenum}
}

\llncs{
We now prove an additional property of 5c-labelings.

\begin{lem}\label{lem:sum_jumps}
 Let $\cL$ be a 5c-labeling of a 5-triangulation $G$. For any inner edge $e$, the sum of label jumps between consecutive corners in counterclockwise order around $e$ is equal to 5. This is represented in Figure~\ref{fig:jumps_around_edge}.
\end{lem}

\begin{proof}
\llncs{As noted above,}\addllncs{Note that} for any inner vertex or any inner face $x$, the sum $\cwjump(x)$ of label jumps between consecutive corners in clockwise order around $x$ is equal to 5. For an inner edge~$e$ we denote by $\ccwjump(e)$ the sum of label jumps between consecutive corners in counterclockwise order around~$e$. 


Observe that a clockwise label jump around an inner vertex or an inner face is either a counterclockwise label jump around an inner edge, or a label jump along an outer edge (and there are exactly 5 such jumps, each with value $1$). Hence, 
$\ds \sum_{f \in F} \cwjump(f) + \sum_{v \in V} \cwjump(v) = 5 + \sum_{e \in E} \ccwjump(e),$
where $V, E, F$ are the sets of inner vertices, inner faces and inner edges, respectively. The left-hand side is equal to $5(|V|+|F|)$, which in turn is equal to $5+5|E|$ by the Euler relation. Since $\ccwjump(e)$ must be a nonzero (by Condition (L2)) multiple of 5, it has to be exactly 5, for all $e \in E$.
\end{proof}

\fig{width=0.3\linewidth}{jumps_around_edge}{Counterclockwise label jumps around an edge.}
}

\llncs{
\fig{width=\linewidth}{local_rules_5c}{Local rules giving the bijections between 5c-orientations, 5c-labelings, and 5c-wood. Vertices of $G$ are represented by solid black dots, whereas dual vertices (in $G^+$) are represented by squares, and edge-vertices are represented by crosses.
(a) Local rule giving the bijection $\Phi$ from 5c-labelings to 5c-orientations. (b) Local rule giving the bijection $\Theta$ from 5c-labelings to 5c-woods. (c) The straight-path rule giving the bijection $\Theta\circ \ov \Phi$ from 5c-orientations to 5c-woods.}
}
\addllncs{
\fig{width=.8\linewidth}{local_rules_5c_llncs}{Rules giving the bijections between 5c-orientations, 5c-labelings, and 5c-wood. Vertices of $G$ are represented by solid black dots, whereas dual vertices \llncs{(in $G^+$) }are represented by squares, and edge-vertices are represented by crosses.
(a) Local rule giving the bijection $\Phi$ from 5c-labelings to 5c-orientations. (b) Local rule giving the bijection $\Theta$ from 5c-labelings to 5c-woods.}
}

Next, we define a bijection $\Phi$ between 5c-labelings and 5c-orientations. The mapping $\Phi$ is represented in Figure~\llncs{\ref{fig:local_rules_5c}(a)}\addllncs{\ref{fig:local_rules_5c_llncs}(a)}. 

\begin{Def}\label{def:bij_label_orient}
Given a 5c-labeling $\cL$ of $G$, we define an orientation $\Phi(\cL)$ on $G^+$ as follows. 
First we note that there is a one-to-one correspondence between the inner corners of $G$ and the inner faces of $G^+$, hence we interpret $\cL$ as a labeling of the inner faces of $G^+$\llncs{ by labels in $[1:5]$}.
 Let $e = \{v, x\}$ be an inner edge of $G^+$, where $v$ is either a primal or dual vertex, and $x$ is an edge-vertex. 
\begin{itemize}
 \item 
 If $v$ is a primal vertex, and $f^-$ and $f^+$ are the \llncs{two }faces incident to $e$ in $G^+$ in clockwise order around $v$, then $e$ is oriented toward $v$ if and only if the label jump from $f^-$ to $f^+$ is~0 (i.e., $f^-$ and $f^+$ have the same label).
 \item 
 If $v$ is a dual vertex, and $f^-$ and $f^+$ are the \llncs{two }faces incident to $e$ in $G^+$ in clockwise order around $v$, then $e$ is oriented toward $v$ if and only if the label jump from $f^-$ to $f^+$ is~1.
\end{itemize}
\end{Def}

\begin{lem}\label{lem:bij_label_orient}
The map $\Phi$ is a bijection between the set $\bL_G$ of 5c-labelings on $G$ and the set $\bO_G$ of 5c-orientations on $G$.
\end{lem}
\addllncswa{The proof of Lemma \ref{lem:bij_label_orient} can be found in Appendix~\ref{sec:pf_lem_bij_label_orien}.}
\addllncswoa{The proof of Lemma \ref{lem:bij_label_orient} can be found in the appendix of~\cite{OB-EF-SL:5c-Schnyder}.}
\llncs{
\begin{proof}
Let $\cL\in\bL_G$. 
We first show that $\Phi(\cL)$ is a 5c-orientation. Condition (O0) is a direct consequence of Condition (L0). Every inner primal vertex has outdegree 5 by Condition (L1), and every dual vertex has outdegree 2 by Condition (L2). 
Also, Lemma~\ref{lem:sum_jumps} implies that every edge-vertex has outdegree 1. Hence, Condition (O1) holds, and  $\Phi(\cL)$ is a 5c-orientation. 

We now describe the inverse map. 
Let $\mathcal{O}$ be a 5c-orientation. The orientations of edges in $\cO$ dictate the label jumps between consecutive corners around vertices and faces. 
One can then follow these label jumps in order to propagate labels, starting from corners at the outer vertices (which are fixed by Condition (L0)) inward to all corners. If no conflict arises in this propagation (so that all the dictated label jumps are satisfied), then one obtain a labeling of corners $\cL$, which we denote by $\ov{\Phi}(\cO)$. We prove below that, for any 5c-orientation $\cO$, no conflict occurs during the propagation of labels, and that $\ov{\Phi}(\cO)$ is a 5c-labeling.


Consider the \emph{corner graph} $C_G$ of $G$ defined as follows: $C_G$ is a directed graph whose vertices are the inner corners of $G$, and there is an oriented edge from a corner $c$ to a corner $c'$ in $C_G$ if $c'$ is the corner following $c$ in clockwise order around a face or a vertex. A corner graph is represented in Figure~\ref{fig:corner_graph}. $C_G$ is naturally endowed with an embedding which is induced by the embedding of $G$ (in fact $C_G$ is obtained from the dual of $G^+$ by deleting a vertex). In particular, $C_G$ has three types of inner faces, corresponding to inner vertices, inner edges and inner faces of $G$ respectively.

We define the $\cO$\emph{-weight} $w(a)$ of an arc $a = (c,c')$ of $C_G$ to be the label jump from $c$ to $c'$ determined by $\mathcal{O}$ according to the rules given by Figure~\llncs{\ref{fig:local_rules_5c}(a)}\addllncs{\ref{fig:local_rules_5c_llncs}(a)}. The $\cO$\emph{-weight} $w(P)$ of a directed path $P$ of $C_G$ is the sum of the $\cO$-weight of the arcs of $P$. Note that the propagation rule causes no conflicts if and only if for any two (not necessarily consecutive) corners $c$ and $c'$ and any two directed paths $P_1, P_2$ in $C_G$ from $c$ to $c'$, we have $w(P_1) \equiv w(P_2) \pmod 5$. To show the later property it suffices to show that for any simple cycle $C$ in $C_G$, we have 
$$\sum_{a \in C^+}w(a) - \sum_{a \in C^-}w(a) \equiv 0 \pmod 5,$$ 
where $C^+$ and $C^-$ are the sets of arcs appearing clockwise and counterclockwise on $C$, respectively. It is easy to see that this holds if and only if the $\cO$-weight of the contour of each inner face in $C_G$ is congruent to 0 modulo 5. This last condition is implied by Condition (O1) of 5c-orientations.  
Hence the corner labeling $\cL=\ov{\Phi}(\cO)$ is well-defined. Moreover $\cL$ is indeed a 5c-labeling because Condition (L0) is satisfied by definition, and Conditions (L1) and (L2) are easy consequences of Condition (O1). 



Finally, it is easy to see that $\Phi$ and $\ov{\Phi}$ are inverse to each other, hence they give bijections between 5c-labelings and 5c-orientations of $G$.
\end{proof}

\fig{width=0.8\linewidth}{corner_graph}{A 5-triangulation $G$ and its corner graph $C_G$.}

}

The third incarnation \llncs{of 5c-structures is as woods}\addllncs{is called \emph{5c-woods}}, where the structure is encoded by a tuple $\cW = (W_1,...,W_5)$ of directed trees. An example is given in Figure~\ref{fig:5c_structures}(a).

It is sometimes convenient to think of the tuple $\cW = (W_1,...,W_5)$ as a coloring in $[1:5]$ of the inner arcs of $G$. We say that an inner arc $a$ of $G$ has a color \llncs{$i \in [1:5]$}\addllncs{$i$} if this arc belongs to\llncs{the set} $W_i$. 
The precise definition of a 5c-wood is as follows.

\begin{Def}\label{def:wood}
Given a 5-triangulation $G$, a \emph{5c-wood} of $G$ is a tuple $\cW = (W_1,...,W_5)$ of disjoint subsets of inner arcs, satisfying the following conditions:
\begin{enumerate}
  \item[(W0)] No arc in $\cW$ starts at an outer vertex, and those ending at the outer vertex $v_i$ have color~$i$ for all $i\in[1:5]$. 
	\item[(W1)] Every inner vertex $v$ has a unique outgoing arc of color $i$, for \llncs{each} $i \in [1:5]$, and these arcs appear in clockwise order around $v$.
	\item[(W2)] Let $v$ be an inner vertex with incident outgoing arcs $a_1,...,a_5$ of colors $1,...,5$, respectively.  Any arc $a$ of color $i$ having terminal vertex $v$ appears weakly between $a_{i+2}$ and $a_{i+3}$ in clockwise order around $v$ (\emph{weakly} means that $a$ may be the arc opposite to $a_{i+2}$ or $a_{i+3}$).
 	\item[(W3)] Every inner edge has at least one color.
\end{enumerate}
\end{Def}

Conditions (W0-W3) are illustrated in the bottom row of Figure~\ref{fig:def-incarnations}.
We will prove later (see Proposition~\ref{prop:acyclic}) that for any 5c-wood $\cW=(W_1,\ldots,W_5)$ the set of arcs $W_i$ is acyclic for all~$i$. Given that every inner vertex has outdegree 1 in $W_i$, it follows that \llncs{the set of arcs}$W_i$ is a tree spanning all the inner vertices \llncs{together with}\addllncs{and} the outer vertex $v_i$, and oriented toward the root-vertex $v_i$ (that is, every arc in $W_i$ is oriented from child to parent when $v_i$ is taken as the root of the tree $W_i$).

Now we define a bijection $\Theta$ between 5c-labelings and 5c-woods. The mapping $\Theta$ is represented in Figure~\llncs{\ref{fig:local_rules_5c}(b)}\addllncs{\ref{fig:local_rules_5c_llncs}(b)}.

\begin{Def}\label{def:bij_label_wood}
Given a 5c-labeling $\cL$ of $G$, we define a tuple $\Theta(\cL) = (W_1,...,W_5)$ of subsets of inner arcs of $G$ (interpreted as a partial arc coloring) as follows: an inner arc $a=(u,v)$  receives color $i$ if the labels of the corners on the left and on the right of $a$ (at $u$) are $i+2$ and $i+3$, respectively; the arc $a$ has no color if the two labels are equal.
\end{Def}

\begin{lem}\label{lem:bij_label_wood}
The map $\Theta$ is a bijection between the set $\bL_G$ of 5c-labelings on $G$ and the set $\bW_G$ of 5c-woods on $G$.
\end{lem}

\addllncswa{The proof of Lemma \ref{lem:bij_label_wood} can be found in Appendix~\ref{sec:lem_bij_label_wood}.}
\addllncswoa{The proof of Lemma \ref{lem:bij_label_wood} can be found in the appendix of~\cite{OB-EF-SL:5c-Schnyder}.}

\llncs{
\begin{proof}
 We first show that $\Theta(\cL)$ is a 5c-wood. By (L0), the arcs that start at outer vertices receive no color. 
 Moreover, Conditions~(L1) and~(L2) and Lemma~\ref{lem:sum_jumps} imply that for any inner edge $e=\{u,v\}$ with labels $i$ on each side at $v$, the labels at $u$ must be $i+2$ 
 on the left and $i+3$ on the right, so that the arc $(u,v)$ receives label $i$. 
 This implies that the inner arcs ending at $v_i$ receive color $i$ by $\Theta$. Hence $\Theta(\cL)$ satisfies (W0).


 It is clear that (W1) is a direct consequence of (L1). To verify (W2), consider an inner arc $a=(u,v)$ of color $i\in\set5$, with $v$ an inner vertex.   
 Let $i+2, i+3, j, k$ be the labels in counterclockwise order around $a$, starting with those two incident to $u$.  
 By (L2), $j \in \{i-1,i\}$ and $k \in \{i,i+1\}$. Therefore the opposite arc $-a$ has either no color while being 
 between the arcs of colors $i+2$ and $i+3$ starting from $v$, or it has color $i+2$, or color $i+3$, which proves that the arc $a$ satisfies (W2).

 Lastly, for any inner edge $e=\{u,v\}$,  
 Lemma~\ref{lem:sum_jumps} and Conditions~(L1) and (L2) easily imply that either 1 or 2 arcs of $e$ receive a color (depending on having one or two  
  label jumps equal to $2$ along $e$, giving respectively two or one label jumps equal to 1 across $e$).   
 Therefore $\Theta(\cL)$ satisfies (W3) and is a 5c-wood.

 To prove that $\Theta$ is a bijection, we now describe the inverse map. Given a 5c-wood $\cW$ on $G$, we define a corner labeling $\ov{\Theta}(\cW)$ as follows:
 \begin{itemize}
\item  the inner corners incident to the outer vertex $v_i$ receive label $i$; 
\item a corner incident to an inner vertex $v$ receives label $i$ if it is between the outgoing arcs of colors $i+2$ and $i+3$ in clockwise order around $v$.
\end{itemize}

We need to show that $\ov{\Theta}(\cW)$ is indeed a 5c-labeling. Conditions (L0) and (L1) are clearly satisfied. 
Next we check that any label jump in clockwise direction around a face is equal to~1 or~2. Let $e=\{u,v\}$ be an inner edge of $G$. 
By (W3), one of its two arcs has some color $i$, say the one starting at $u$. Let $i+2, i+3, j, k$ be the labels of the corners in counterclockwise order around $e$, starting
with those two incident to $u$.  
Then (W2) implies that $(j,k) \in \{(i-1,i), (i,i), (i,i+1)\}$. In all cases, the label jumps from $i+3$ to $j$ and from $k$ to $i+2$ are both in $\{1,2\}$. Since the sum of label jumps in clockwise direction around a face is a multiple of 5, and each jump is in $\{1,2\}$, the only possibility is that one label jump is equal to 1 and two label jumps are equal to 2. This shows that $\ov{\Theta}(\cW)$ satisfies (L2), hence is a 5c-wood. 

Finally, it is clear that the mappings $\Theta$ and $\ov{\Theta}$ are inverse of each other, hence they give bijections between the sets of 5c-labelings and 5c-woods of $G$. 
\end{proof}
}

\begin{remark}\label{rk:pentagons}
We can give a fourth incarnation of 5c-structures, as a representation of the 5-triangulation by a contact system of ``soft pentagons'' as indicated in Figure~\ref{fig:soft-pentagons-bis}. Let $G$ be a 5-triangulation. A \emph{soft pentagon contact representation} of $G$ is a collection of interior-disjoint ``soft pentagons'' (the sides of which are curves rather than straight-line segments) inside a regular ``outer pentagon'', with vertices of each pentagon labeled 1 to 5 in clockwise order such that
\begin{compactitem}
\item the side $[i+2,i+3]$ of the outer pentagon corresponds to the outer vertex $v_i$ of $G$, for $i\in[1:5]$,
\item the soft pentagons correspond to the inner vertices of $G$,
\item contacts between pentagons correspond to the inner edges of $G$,
\item all the pentagon contacts occur between a soft pentagon vertex of label $i\in[1:5]$ and either the side labeled $[i+2,i+3]$ of another soft pentagon (vertex-to-side contact), or the vertex $i+2$ or $i+3$ of another soft pentagon (vertex-to-vertex contact), or the side labeled $[i+2,i+3]$ of the outer pentagon (outer contact, always vertex-to-side).  
\end{compactitem}
A soft pentagon contact representation is \emph{complete} if every vertex of every soft pentagon has a contact.
It is easy to see that the 5c-woods of $G$ are in bijection with its complete soft pentagon contact representations. This is illustrated in Figure \ref{fig:soft-pentagons-bis}. 
This topological interpretation of 5c-woods allows us to compare them to the  \emph{five color forests} introduced in~\cite{FeScSt18}. For a 5-triangulation $G$ these structures correspond to general (potentially incomplete) soft pentagon contact representations without vertex-to-vertex contact. In particular, one can easily turn any 5c-wood into a five color forest, upon resolving each vertex-to-vertex contact into a vertex-to-side contact. At the level of the 5c-wood (interpreted as a collection of arcs colored in $[1:5]$), this amounts to deleting one colored arc for each edge of $G$ bearing 2 colored arcs. Note however that it is usually not possible to turn a five color forest into a 5c-wood. Indeed, five color forests exist for any simple 5-triangulation, while (as stated below) 5c-woods only exist for 5c-triangulations.
\end{remark}

\fig{width=.7\linewidth}{soft-pentagons-bis}{Left: A 5c-wood. Right: The corresponding (complete) soft pentagon contact representation.}

\llncs{Before closing this section, we describe the bijection $\Psi$ between 5c-orientations and 5c-woods obtained by composing the bijections $\Theta$ and $\ov{\Phi}$ defined above.
Let $\cO$ be a 5c-orientation of a 5-triangulation $G$. We associate to $\cO$ a (partial) coloring $\Psi(\cO)$ of its inner arc as follows.
Let $a=(u,u')$ be an inner arc of $G$, and let $x$ be the edge-vertex of $G^+$ corresponding to the edge $\{u,u'\}$. If the edge $\{u,x\}$ of $G^+$ is oriented toward $u$ in $\cO$, then the arc $a$ has no color in $\Psi(\cO)$. Otherwise the arc $a$ has a color determined by the endpoint of a directed path $P_a$ starting with the arc $a$, and constructed by applying repeatedly the \emph{straight-path rule}\footnote{Our \emph{straight-path rule} is the analogue of the one described in~\cite[Fig.11]{Felsner:lattice} for 3-connected Schnyder woods.} indicated in Figure~\ref{fig:local_rules_5c}(c). Explicitly, we consider the directed path $P_a$ on $G$ ending at an outer vertex, and made of the arcs $b_1,\ldots,b_k$, where $b_1=a$ and the arc $b_{i+1}$ is determined from the arc $b_i$ for all $i\in [1:k-1]$ using the following rule. 
Let $v$ be the terminal vertex of $b_i$ (which is supposed to be an inner vertex), and let $x$ be the edge-vertex of $G^+$ placed in the middle of $b_i$. 
\begin{compactitem}
\item If the edge $\{x,v\}$ is oriented toward $v$ on $\cO$, then we consider the outgoing arc $(v,x')$ of $\cO$ such that there are 2 outgoing arcs between $(v,x)$ and $(v,x')$ in clockwise order around $v$. We define $b_{i+1}$ as the arc $(v,w)$ of $G$ on which $x'$ is placed.
\item If the edge $\{x,v\}$ is oriented toward $x$ on $\cO$ and the outgoing arc at $x$ in $\cO$ is on the right (resp. left) of $b_i$, then we consider the 
  outgoing arc $(v,x')$ of $\cO$ such that there are 2 outgoing arcs between $(v,x)$ and $(v,x')$ in clockwise (resp. counterclockwise) order around $v$. We define $b_{i+1}$ as the arc $(v,w)$ of $G$ on which $x'$ is placed.
 \end{compactitem}
Finally, we consider the outer vertex $v_j$ at which the path $P_a$ ends, and assign color $j$ to the arc $a$. It is not hard to check that the arc coloring $\Psi(\cO)$ is equal to the 5c-wood $\Theta\circ\ov\Phi(\cO)$.
}



\subsection{Existence and computation of Schnyder structures}\label{sec:construction}

We now state\llncs{ and prove} our main result on 5c-structures (where we include the already obtained bijective statements).

\begin{thm}\label{thm:main}
A $5$-triangulation $G$ admits a 5c-wood (resp. 5c-labeling, 5c-orientation) if and only if $G$ is a 5c-triangulation. 
In this case, the sets of 5c-woods, 5c-labelings and 5c-orientations of $G$ are in bijection. Moreover, a 5c-wood (resp. 5c-labeling, 5c-orientation) of a 5c-triangulation can be computed in  linear time in the number of vertices.
\end{thm}

We have already \llncs{shown}\addllncs{stated} in Section~\ref{sec:incarnations} that, for any $5$-triangulation $G$, the sets of 5c-woods, 5c-labelings and 5c-orientations of $G$ are in bijection. Moreover, it is clear that these bijections can be performed in linear time in the number of vertices. 
\llncs{Hence, it now suffices}\addllncs{It thus remains} to show that a 5-triangulation admits a 5c-orientation if and only if it is a 5c-triangulation, and that, in this case, a 5c-orientation can be computed in time linear.
We already proved this existence (and algorithmic) result  in \cite{OB-EF-SL:Grand-Schnyder} within the larger framework of \emph{quasi-Schnyder structures}. 
\llncs{Since there are many layers to the proof given in \cite{OB-EF-SL:Grand-Schnyder}, we sketch a more direct proof (and construction algorithm) below.}
\addllncs{Since there are many layers to the proof given in \cite{OB-EF-SL:Grand-Schnyder}, we sketch a more direct proof (and construction algorithm)}
\addllncswa{in Appendix~\ref{sec:pf_main}.}\addllncswoa{in the appendix of~\cite{OB-EF-SL:5c-Schnyder}.} 

\llncs{
Let us first show that a 5-triangulation $G$ admitting a 5c-orientation is necessarily a 5c-triangulation. If $G$ admits a 5c-orientation, it also admits a 5c-wood $\cW=(W_1,\ldots,W_5)$. For an inner vertex $v$ of $G$, we consider the paths $P_i(v)$ in $W_i$ from $v$ to $v_i$, for $i \in [1:5]$. 
As we will establish in Section~\ref{sec:paths_regions} the paths $P_1(v),\ldots,P_5(v)$ have no vertex in common except $v$.  Hence $v$ cannot be in the interior of a simple cycle of length less than 5. This proves that $G$ is a 5c-triangulation.

In the rest of this section, we describe a linear time algorithm for computing a 5c-orientation of 
a 5c-triangulation $G$. The process is illustrated in Figure~\ref{fig:construction}, it uses  some facts about \emph{regular edge labelings}, which are combinatorial structures for triangulations of the 4-gon first defined by He in \cite{He93:reg-edge-labeling}, and rediscovered in \cite{Fu07b} under the name of \emph{transversal structures}. We will rely on an incarnation of these structures as certain outdegree-constrained orientations.  
Let $H$ be a triangulation of the 4-gon, and let $\Hd$ be the map obtained by placing a \emph{face-vertex} in each inner face of $H$ and joining it to the three vertices incident to this face (and then erasing the original edges of $H$). 

A \emph{regular orientation} of $H$ is an orientation of $\Hd$ such that every face-vertex of $\Hd$ has outdegree 1 and every original inner vertex of $H$ has outdegree 4; an example is shown in Figure~\ref{fig:construction}(b). It follows from \cite{He93:reg-edge-labeling,Fu07b} that a triangulation of the 4-gon $H$ admits a regular orientation if and only if $H$ has no cycle of length less than 4 containing a vertex in its interior (equivalently, it has no loop nor multiple edges, and every 3-cycle is the boundary of a face). Moreover, such an orientation can be computed in linear time~\cite{KantHe97:reg-edge-labeling-linear} (an independent proof is given in \cite{OB-EF-SL:Grand-Schnyder}).


\fig{width=\linewidth}{construction}{Construction of a 5c-orientation for a 5c-triangulation. (a) A 5c-triangulation $G$. (b) The triangulation of the $4$-gon $H$ (obtained from $G$ by placing a vertex $v_0$ in the outer face of $G$), and the regular orientation $\cA$ of $\Hd$. (c) The orientation $\cB$ of $\Gd$. (d) The spanning tree $T$ (indicated by bold lines) oriented from the root vertex $v_*$ to the leaves, and the bijection $\varphi$ between the set $S$ of inner faces of $\Gd$ and set $\ov T$ of edges not in $T$: for each edge $e$ in $\ov T$, the corresponding face $\varphi(e)$ is indicated by a shaded triangle inside of $\varphi(e)$ and incident to $e$. (e) The rule (inside each inner face $s$ of $\Gd$) for computing the orientation $\cO$ of the primal-dual completion $G^+$. (f) The resulting 5c-orientation $\cO$ of $G^+$.}

Let $G$ be a 5c-triangulation. Let $H$ be the triangulation of the 4-gon obtained from $G$ by placing a new vertex $v_0$ in the outer face of $G$ and joining $v_0$ to $v_1$, $v_2$, $v_3$ and $v_4$. 
Since $G$ is a 5c-triangulation, $H$ has no cycle of length less than 4 containing a vertex in its interior. Hence, as recalled above, $H$ admits a regular orientation $\cA$, which can be computed in linear time. The situation is represented in Figure~\ref{fig:construction}(a-b).
Let $\Gd$ be the subgraph of $\Hd$ obtained by deleting the vertices and edges of $\Hd$ in the outer face of $G$. 
Consider the restriction $\cA'$ of $\cA$ to $\Gd$. Let $b_i$ be the face-vertex of $\Gd$ in the inner face of $G$ incident to the outer edge $\{v_i,v_{i+1}\}$ for $i\in [1:5]$. By definition, the face-vertex $b_i$ has outdegree 1 in $\cA'$, and we can reorient one of the edges $\{b_i,v_i\}$ or $\{b_i,v_{i+1}\}$ to obtain an orientation $\cB$ of $\Gd$ such that 
\begin{compactitem}
\item for all $i\in [1:5]$, the face-vertex $b_i$ has outdegree 2, 
\item the face-vertices of $\Gd$ distinct from $b_1,\ldots,b_5$ have outdegree 1, 
\item the original inner vertices of $G$ have outdegree 4.
\end{compactitem}
The orientation $\cB$ of $\Gd$ is represented in Figure~\ref{fig:construction}(c).

It is easy to check (using the Euler relation), that in $\cB$ there is a unique oriented edge $e_*$ of $\Gd$ whose initial vertex is an outer vertex of $G$. Let $v_*$ be the initial vertex of $e_*$.
One can show that the orientation $\cB$ is \emph{accessible from $v_*$}, that is, for every vertex of $v$ of $\Gd$ there is a directed path from $v_*$ to $v$ in $\cB$. We leave the proof of this accessibility property to the reader, and only mention that it relies on the fact that $G$ has no cycle of length less than 5 containing a vertex in its interior. Since $\cB$ is accessible from $v_*$, there exists a spanning tree $T$ of $\Gd$ rooted at $v_*$ and such that every edge of $T$ is oriented from parent to children in~$\cB$. The spanning tree~$T$ is represented in Figure~\ref{fig:construction}(d).

Next we use $\cB$ and $T$ to define an orientation $\cO$ of the primal-dual completion $G^+$ of $G$. Let $S$ be the set of inner faces of $\Gd$. By definition, $S$ is in bijection with the inner edges of $G$, and the primal-dual completion $G^+$ of $G$ is obtained by 
\begin{compactitem}
\item adding an edge-vertex $x_s$ in each inner face $s\in S$ and joining it to the 4 incident vertices of $\Gd$,
\item adding an edge-vertex $x_i$ for all $i \in [1:5]$ and joining it to $v_i$, $v_{i+1}$ and $b_i$.
\end{compactitem}

Let $\ov T$ be the set of edges of $\Gd$ not in $T$. It is classical (for any spanning tree $T$ of a plane graph) that $\ov T$ is in bijection with the set $S$ of inner face of $\Gd$, where the bijection $\varphi$ associates to an edge $e\in \ov T$ the face $\varphi(e)\in S$ incident to $e$ and enclosed by the unique cycle in $T\cup \{e\}$. The bijection $\varphi$ is indicated in Figure~\ref{fig:construction}(d).
For $s\in S$ we denote by $t_s$ the terminal vertex of the edge $e\in \ov T$ such that $\varphi(e)=s$. We now orient $G^+$ by the following rule illustrated in Figure~\ref{fig:construction}(e):
\begin{compactitem}
\item for each face $s\in S$, the edge $(x_s,t_s)$ is oriented toward $t_s$, while the 3 other edges incident to $x_s$ are oriented toward $x_s$,
\item for each $i \in [1:5]$, the edge $(x_i,b_i)$ is oriented toward $b_i$.
\end{compactitem}

The resulting orientation $\cO$ of $G^+$ is indicated in Figure~\ref{fig:construction}(f). 
One can check that $\cO$ is a 5c-orientation of $G$: each edge-vertex has outdegree 1, each face-vertex has outdegree 2, and each inner original vertex has outdegree $5$ (we leave the proof to the reader). 
Moreover, the operations required to go from the regular orientation $\cA$ to the 5c-orientation $\cO$ can clearly be performed in linear time. This concludes the proof of Theorem~\ref{thm:main}.
}

\section{Graph drawing algorithm for 5c-triangulations}\label{sec:algo}

\subsection{Paths and regions}\label{sec:paths_regions}

In this section we explain how a 5c-wood gives rise to paths and regions associated to each vertex of a 5c-triangulation; see Figure~\ref{fig:paths-regions}(b).

Let $G$ be an undirected graph. A \emph{biorientation} of $G$ is an arbitrary subset of arcs of $G$ (so that each edge of $G$ is oriented in either 0, 1 or 2 directions). A biorientation is \emph{acyclic} if it contains no simple directed cycle, with the convention that two opposite arcs (coming from the same edge oriented in 2 directions) do not constitute a simple cycle. Suppose now that $G$ is a 5c-triangulation, and let $\cW=(W_1,\ldots,W_5)$ be a 5c-wood. For $i \in [1:5]$, we denote by $W_i^-$ the set of arcs obtained by reversing the arcs in $W_i$ (that is, taking the opposite arcs).

\begin{prop}\label{prop:acyclic}
Let $\cW=(W_1,\ldots,W_5)$ be a 5c-wood of $G$. For all $i\in [1:5]$, the biorientation \llncs{$\cO_i$ made of the union of arcs $W_i\cup W_{i-1}\cup W_{i+1}\cup W_{i-2}^-\cup W_{i+2}^-$}\addllncs{$\cO_i = W_i\cup W_{i-1}\cup W_{i+1}\cup W_{i-2}^-\cup W_{i+2}^-$} is acyclic. Consequently, for each pair $j,k\in [1:5]$, the biorientation 
$W_j\cup W_k^-$ is acyclic.
\end{prop}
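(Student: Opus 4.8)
The plan is to prove the primary assertion — that each biorientation $\cO_i$ is acyclic — and then obtain the final ``consequently'' clause as a short corollary, since every $W_j\cup W_k^-$ embeds into a suitable $\cO_i$. I would dispose of the corollary first, as it is purely arithmetic. Fix a pair $j,k\in[1:5]$ with $j\neq k$ and seek an index $i$ (modulo $5$) with $W_j\subseteq\cO_i$ and $W_k^-\subseteq\cO_i$; by the definition of $\cO_i$ this asks for $j\in\{i-1,i,i+1\}$ and $k\in\{i-2,i+2\}$, i.e.\ $i\in\{j-1,j,j+1\}$ and $i\in\{k-2,k+2\}$. Now $\{k-2,k+2\}$ is a pair of consecutive residues modulo $5$, while the complement of $\{j-1,j,j+1\}$ is the two-element set $\{j-2,j+2\}$; these coincide only when $k=j$, so for $j\neq k$ the set $\{j-1,j,j+1\}$ meets $\{k-2,k+2\}$ and a valid $i$ exists. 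For this $i$ we have $W_j\cup W_k^-\subseteq\cO_i$, and any simple directed cycle of $W_j\cup W_k^-$ is then a simple directed cycle of $\cO_i$, so acyclicity of $\cO_i$ transfers. The diagonal case $j=k$ is immediate, since a simple directed cycle of $W_j\cup W_j^-$ would be an undirected cycle of the forest $W_j$.

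It remains to show that $\cO_i$ is acyclic, which is the heart of the proposition. I would argue by contradiction with an innermost cycle. Suppose $\cO_i$ has a simple directed cycle, and let $C$ be one bounding an inclusion-minimal region: take the closed disk $D$ cut off by $C$ on the side not containing the outer face, chosen so that no simple directed cycle of $\cO_i$ lies strictly inside $D$. Since $G$ is a triangulation, every inner face of $D$ is a triangle, and the goal is to confront $C$ and the triangles of $D$ with the local rule defining a 5c-wood, namely the prescribed clockwise pattern of colours and orientations among the edges around each interior vertex.

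The feature I would extract from that local rule is a \emph{half-plane property} for $\cO_i$: because $\cO_i$ uses the three consecutive colours $i-1,i,i+1$ in the forward direction and the two flanking colours $i-2,i+2$ reversed, the edges at each interior vertex split so that the $\cO_i$-outgoing arcs occupy one contiguous angular sector and the $\cO_i$-incoming arcs occupy the complementary sector. Granting this, I would run a combinatorial angle-counting (discharging) argument over the corners of the faces of $\overline D$: each corner is charged according to whether its two sides are both outgoing, both incoming, or mixed for $\cO_i$; the half-plane property fixes the charge sum around every interior vertex, the triangulation fixes the per-face contribution, and comparing the global total with the single full turn made by the directed boundary cycle $C$ produces a numerical contradiction. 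Hence no such $C$ exists and $\cO_i$ is acyclic.

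I expect the genuinely delicate step to be establishing the half-plane property from the 5c-wood axioms — in particular verifying that it is exactly the reversal of the two \emph{far} colours $i\pm2$ (rather than some adjacent pair) that keeps the out-arcs and in-arcs of $\cO_i$ in contiguous opposite sectors at every vertex, since this contiguity is what makes $D$ behave as a consistently oriented region and rules out a closed directed cycle. The same local analysis will have to be carried out at the vertices lying on $C$ and for any edge of $G$ having both endpoints on $C$, which is where I anticipate the bookkeeping to be most involved.
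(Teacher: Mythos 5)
Your handling of the ``consequently'' clause is correct and complete: the index computation showing that $\{j-1,j,j+1\}\cap\{k-2,k+2\}\neq\emptyset$ whenever $j\neq k$ is precisely why the paper can say the second statement ``follows immediately,'' and your separate treatment of $j=k$ is sound (though to avoid circularity you should phrase it as: $W_j\subseteq\cO_j$ is acyclic, and together with the out-degree-one condition at inner vertices this makes $W_j$ a forest, so $W_j\cup W_j^-$ has no simple directed cycle other than excluded opposite-arc pairs). The problem lies in the main claim. Your argument for the acyclicity of $\cO_i$ is a program rather than a proof: both of its pillars --- the contiguous-sector (half-plane) property at each vertex, and the corner-discharging count over the disk $D$ --- are announced but never carried out, and you yourself flag each of them as the delicate step. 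The sector property is in fact true (it does follow from the local rule defining 5c-woods, with the reversed colors $i\pm2$ slotting between the out-arcs $e_{i-1},e_i,e_{i+1}$), but the decisive gap is elsewhere: the half-plane property \emph{alone cannot yield the contradiction}. A single clockwise facial triangle has, at each of its three vertices, exactly one $\cO_i$-incoming and one $\cO_i$-outgoing arc, which is trivially compatible with contiguous opposite sectors; so any angle count based only on in/out contiguity has consistent totals on this configuration and cannot rule it out. Your discharging would have to invoke the finer data of \emph{which colors} occupy which sectors --- essentially the corner labels of the 5c-labeling $\ov\Theta(\cW)$ --- and as written that input, and the actual charges for interior and boundary corners, are missing.

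The paper reaches the contradiction by exploiting minimality of $C$ much more aggressively than your sketch does, which lets it avoid any global count. If $v_0$ is a vertex strictly inside the minimal cycle $C$, then each directed path $P_j(v_0)$ in $W_j$ must exit the enclosed region (by minimality it cannot cycle inside), yielding paths $P_{i-2}'$ and $P_i'$ from $v_0$ to $C$; since $\cO_i$ contains both $W_{i-2}^-$ and $W_i$, the reverse of $P_{i-2}'$ concatenated with $P_i'$ is a directed path of $\cO_i$ joining two vertices of $C$ through the interior, contradicting minimality. Hence the interior of $C$ contains no vertex, and then no edge either (every inner edge carries at least one arc of $\cO_i$, and a chord would again violate minimality), so $C$ is the contour of a single inner face. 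At that point the corner labels finish the job: if $C$ is clockwise all three corner labels of the face lie in $\{i-1,i-2\}$, if counterclockwise in $\{i+1,i+2\}$, contradicting that the three corners of a face carry distinct labels. So either adopt this face-reduction route, or, if you insist on the discharging approach, you must first prove the sector statement from the 5c-wood axioms and then exhibit the charge values explicitly, including at vertices of $C$ --- the ``numerical contradiction'' is at present only asserted.
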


\addllncswa{The proof of Proposition \ref{prop:acyclic} can be found in Appendix~\ref{sec:pf_prop_acyclic}.} 
\addllncswoa{The proof of Proposition \ref{prop:acyclic} can be found in the appendix of~\cite{OB-EF-SL:5c-Schnyder}.}  
An example is shown in Figure~\ref{fig:paths-regions}(a). \llncs{It is the analogue of a well-known property of Schnyder woods~\cite{Schnyder:wood2,F01} (stating that for a Schnyder wood $\cW=(W_1,W_2,W_3)$, and for $i\in[1:3]$, the orientation $W_i\cup  W_{i-1}^-\cup W_{i+1}^-$ is acyclic).}

\fig{width=.9\linewidth}{paths-regions}{(a) The biorientation $\cO_3$ for the 5c-wood represented in Figure~\ref{fig:5c_structures}. (b) The paths $P_1(v),\ldots, P_5(v)$ and the region $R_1(v)$ for the 5c-wood\llncs{represented} in Figure~\ref{fig:5c_structures}.}

\llncs{
\begin{proof}
 Suppose for contradiction that $\cO_i$ has a simple directed cycle. We put a partial order $\prec$ on the set of simple directed cycles of $\cO_i$, by declaring $C\prec C'$ if the region enclosed by $C$ is contained in the region enclosed by $C'$. Let $C$ be a directed cycle of $\cO_i$ which is minimal for this partial order.

Suppose first that there is a vertex $v_0$ in the region enclosed by $C$. For $j \in [1:5]$, let $P_j$ be the directed path starting at $v_0$ and made of the arcs in $W_j$. This directed path $P_j$ either ends at the outer vertex $v_j$ or contains a directed cycle, but in either case it does not stay within the region enclosed by $C$ (by minimality of $C$). Hence $P_{j}$ contains a directed path $P_j'$ from $v_0$ to a vertex of $C$. 
Hence $\cO_i$ contains a directed path (going through $v_0$) from a vertex of $C$ to a vertex of $C$: for instance, take the opposite of the path $P_{i-2}'$ concatenated with the path $P_i'$. This contradicts the minimality of $C$, hence \llncs{we conclude that}there is no vertex in the region enclosed by $C$. 

Thus, still by minimality of $C$, there cannot be any edge enclosed by $C$ (since every inner edge is oriented at least 1 way in $\cO_i$). Hence $C$ must be the contour of an inner face $f$ of $G$. If $C$ is a clockwise cycle, it is easy to see that the corners of $f$ are labeled either $i-1$ or $i-2$ in the 5c-labeling $\ov \Theta(\cW)$ (because the corner at a vertex $v$ of $f$ must be incident to the arc of $W_{i+1}$ with initial vertex $v$). But this is impossible since the 3 corners of~$f$ must have distinct labels in the 5c-labeling $\ov \Theta(\cW)$. Similarly, if $C$ is a counterclockwise cycle, then the corners of~$f$ are all labeled either $i+1$ or $i+2$ in $\ov \Theta(\cW)$, which is impossible. 

This completes the proof that $\cO_i$ is acyclic. The second statement follows immediately.
\end{proof}
}

As mentioned in Section~\ref{sec:combin}, Proposition~\ref{prop:acyclic} implies that in any 5c-wood $\cW=(W_1,\ldots,W_5)$ of $G$, each set $W_i$ is a tree directed toward the outer vertex $v_i$. For an inner vertex $v$ of $G$, we denote by $P_i(v)$ the directed path from $v$ to $v_i$ in $W_i$. These paths are indicated in Figure~\ref{fig:paths-regions}(b).
Proposition~\ref{prop:acyclic} implies that the paths $P_1(v),\ldots,P_5(v)$ have no vertex in common except $v$ (since if $P_j(v)$ and $P_k(v)$ had a vertex in common, the biorientation $W_j\cup W_k^-$ would contain a directed cycle). We denote by $R_i(v)$ the region enclosed by the simple cycle made of the outer edge $(v_{i+2},v_{i-2})$ together with the paths $P_{i-2}(v)$ and $P_{i+2}(v)$. See Figure~\ref{fig:paths-regions}(b) for an example.

\llncs{
\begin{remark}
For a graph $G$ with a designated marked vertex $r$, a tuple $(W_1,\ldots,W_k)$ of spanning trees of $G$ is called \emph{independent}~\cite{itai1988multi} if for any vertex $v\neq r$ in $G$, and any pair of indices $i\neq j$ in $[1:k]$, the paths from $v$ to $r$ in the trees $W_i$ and $W_j$ intersect only at $v$ and $r$. From the above property of 5c-woods, we conclude that any 5-connected triangulation $G$ with a marked vertex $r$ of degree $5$ admits an independent 5-tuple of spanning trees (constructed from a 5c-wood of $G'=G\backslash r$). It is not difficult to see that this also holds if the  marked vertex $r$ has degree larger than $5$, since $G'=G\backslash r$ can be completed into a 5c-triangulation by adding 5 vertices 
in the outer face forming an enclosing 5-gon, and adding edges to triangulate the annulus between the 5-gon and the former outer contour. This proves that every maximal 5-connected planar graph with a marked vertex admits an independent 5-tuple of spanning trees, which can be computed in linear time. 
This existence result was first established in~\cite{huck1999independent} (without the requirement on maximality), and the linear time property was proved in~\cite{nagai2000linear} by introducing a shelling order, called \emph{5-canonical decomposition}, specific to 5-connected triangulations. The 5-tuple of spanning trees computed in~\cite{nagai2000linear} partially fulfills the properties of 5c-woods,  but Conditions (W2) and (W3) do not hold in general (although some other crossing conditions are satisfied).
\end{remark}
}



\subsection{Graph drawing algorithm} 
For convenience, we extend the definition of $R_i(v)$ for $v$ an outer vertex by declaring $R_i(v_i)=G$ for all $i\in \set5$, and $R_i(v_j)=\{v_j\}$ for all $j\neq i$.
For a vertex $v$ of $G$ and for $i\in\set5$, the \emph{size} of $R_i(v)$, denoted by $|R_i(v)|$, is the number of inner faces in $R_i(v)$. Letting $n$ be the number of vertices of $G$, the Euler relation implies that $G$ has $2n-7$ inner faces. 
Let $\alpha_i(v):=|R_i(v)|/(2n-7)$. Since the inner faces of $G$ are partitioned among the 5 regions, we have $\sum_{i=1}^5\alpha_i(v)=1$. 

The \emph{5c-barycentric drawing algorithm} then consists of the following steps:
\begin{enumerate}
\item
Draw a regular pentagon $(v_1,v_2,v_3,v_4,v_5)$ (in clockwise order). 
\item
Place each vertex $v$ at the barycenter $\sum_{i=1}^5 \alpha_i(v) v_i$.
\item
Draw each edge of $G$ as a segment connecting the corresponding points.
\end{enumerate}

An output of this algorithm is represented in Figure~\ref{fig:5connbig_drawing}. In all our drawings,\llncs{ we have used the convention that} the outer pentagon is drawn with the edge $\{v_1,v_5\}$ as an horizontal bottom segment.

\fig{width=0.8\linewidth}{5connbig_drawing}{(a) A 5c-wood $\cW=(W_1,\ldots,W_5)$ of a 5c-triangulation (the sets of arcs in $W_1,\ldots,W_5$ are represented by arrows of 5 different colors). (b) The drawing of $G$ obtained by applying the 5c-barycentric drawing algorithm with $\cW$ as input.}

\begin{theo}\label{theo:planar}
For each 5c-triangulation $G$ with $n$ vertices, the 5c-barycentric drawing algorithm yields a planar straight-line drawing of $G$, which can be computed in $O(n)$ operations.
\end{theo}

\llncs{The proofs of planarity and of the time complexity of the drawing are carried in the next two subsections.}

\begin{remark}
Compared to Schnyder's algorithm~\cite{Schnyder:wood2} and to Tutte's spring embedding~\cite{tutte1963draw}, 
it is important here that the 5 outer vertices are placed so as to form a regular pentagon, and we use this property in our proof of planarity.
\end{remark}

\addllncs{We will prove the planarity of the 5c-barycentric drawing in the next section.}
\addllncswa{The time-complexity analysis can be found in Appendix \ref{appendix-time-complexity}.} 
\addllncswoa{The time-complexity analysis can be found in the appendix of~\cite{OB-EF-SL:5c-Schnyder}.} 

\subsection{Proof of planarity}\label{sec:planarity}

\fig{width=\linewidth}{halfplane}{Left: Situation for two vertices $u\neq v$ such that $u\in R_i(v)$, where the interior of $R_i(v)$ is shown in yellow, and 
the interior of $R_{i-2,i+2}(v)$ is shown in blue. Right: In the 5c-barycentric drawing, $u$ is in the half-plane ``below" $v$ upon rotating the drawing such that $\protect\ove{V_i}$ points upward.}


The proof of planarity in Theorem~\ref{theo:planar} crucially relies on a property (illustrated in the right drawing of Figure~\ref{fig:triangle_cw} and to be established in Lemma~\ref{lem:sectors}) stating that the directions of the 
arcs of each color from an inner vertex are constrained to be in certain cones. As a first step we show the following ``half-plane property", illustrated 
in Figure~\ref{fig:halfplane}.

\begin{lem}\label{lem:halfplane}
Let $G$ be a 5c-triangulation endowed with a 5c-wood. 
In the associated 5c-barycentric drawing, for $i\in\set5$, let $\ove{V_i}$ be the vector connecting the center of the outer pentagon (i.e. the center of the circle circumscribed to that pentagon) to the outer vertex $v_i$. 
Then, for any vertices $u\neq v$ of $G$ such that $u\in R_i(v)$, we have $\ove{V_i}\cdot \ove{vu}<0$, where $\ove{vu}$ is the vector connecting $v$ to $u$ in the 5c-barycentric drawing. 
\end{lem}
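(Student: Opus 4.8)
The plan is to turn the claimed sign condition into a comparison of region sizes, and then to exploit the special angles of the regular pentagon. Place the center of the pentagon at the origin, so that $\ove{V_i}=v_i$ and $\sum_{j=1}^5 v_j=0$; let $\rho$ be the circumradius and set $c_1=\cos(2\pi/5)$ and $c_2=\cos(4\pi/5)$, so that $1>c_1>0>c_2$. Since each vertex $w$ is drawn at $\sum_j\alpha_j(w)v_j$, I would write $\ove{vu}=\sum_{j=1}^5(\alpha_j(u)-\alpha_j(v))\,v_j$ and use $\ove{V_i}\cdot v_j=\rho^2\cos(2\pi(i-j)/5)$. Grouping the five terms by cyclic distance from $i$ and eliminating the distance-$1$ contribution via $\sum_j(\alpha_j(u)-\alpha_j(v))=0$ yields, with $\sigma(w):=\alpha_{i-2}(w)+\alpha_{i+2}(w)$ (indices mod $5$),
\[
\ove{V_i}\cdot\ove{vu}=\rho^2\Big[(1-c_1)\big(\alpha_i(u)-\alpha_i(v)\big)-(c_1-c_2)\big(\sigma(u)-\sigma(v)\big)\Big].
\]
Both coefficients $1-c_1$ and $c_1-c_2$ are strictly positive, so it suffices to prove the two size comparisons $|R_i(u)|<|R_i(v)|$ and $|R_{i-2}(u)|+|R_{i+2}(u)|\ge|R_{i-2}(v)|+|R_{i+2}(v)|$.

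I would obtain both from containments of the underlying regions. Recall that $R_i(v)$ is the sector bounded by $P_{i-2}(v)$, $P_{i+2}(v)$ and the outer edge $(v_{i-2},v_{i+2})$ opposite $v_i$, whereas $R_{i-2}(w)\cup R_{i+2}(w)$ is the region \emph{toward} $v_i$, bounded by $P_{i-1}(w)$, $P_{i+1}(w)$ and the two outer edges incident to $v_i$. By Proposition~\ref{prop:acyclic} each $W_j$ is a tree directed to $v_j$, so two directed $W_j$-paths that meet coincide thereafter. In particular $P_{i+2}(u)$ and $P_{i+2}(v)$ (both ending at $v_{i+2}$) merge, and likewise $P_{i-2}(u),P_{i-2}(v)$ and $P_{i\pm1}(u),P_{i\pm1}(v)$. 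The plan is then to show that the boundary paths of $R_i(u)$ reach the boundary of $R_i(v)$ without leaving $R_i(v)$, giving $R_i(u)\subseteq R_i(v)$; and dually that $v$ lies in the region toward $v_i$ from $u$ and that the paths $P_{i\pm1}(v)$ stay inside it, giving $R_{i-2}(v)\cup R_{i+2}(v)\subseteq R_{i-2}(u)\cup R_{i+2}(u)$. Each containment immediately implies the corresponding size comparison.

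The main obstacle is exactly the non-escaping property of these boundary paths: for instance, that the initial segment of $P_{i+2}(u)$ does not meet the opposite boundary path $P_{i-2}(v)$ before merging with $P_{i+2}(v)$. A merely pairwise argument is not enough here, since the acyclicity of $W_{i+2}\cup W_{i-2}^-$ alone does not forbid such a contact; ruling it out requires the local crossing conditions of the 5c-wood, which constrain the cyclic order in which the five colors occur around each vertex, together with the acyclicity of the full biorientation $\cO_i$ of Proposition~\ref{prop:acyclic}. I expect this non-crossing step to be where the proof does its real work. Finally, strictness in the first comparison is easy: since $u\neq v$ lies in $R_i(v)$, the vertex $v$ is on the near side of $u$ and hence $v\notin R_i(u)$, so at least one inner face incident to $v$ lies in $R_i(v)\setminus R_i(u)$, whence $\alpha_i(u)<\alpha_i(v)$ and the displayed expression is strictly negative.
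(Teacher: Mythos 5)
Your route is the same as the paper's: place the pentagon's center at the origin, expand $\ove{V_i}\cdot\ove{vu}$ in barycentric coordinates, use $\sum_{j}(\alpha_j(u)-\alpha_j(v))=0$ to eliminate the distance-one terms, and reduce the lemma to the sign conditions $\alpha_i(u)-\alpha_i(v)<0$ and $\alpha_{i-2}(u)+\alpha_{i+2}(u)\geq \alpha_{i-2}(v)+\alpha_{i+2}(v)$. The paper performs exactly this computation (with $z_-=\ove{V_i}\cdot\ove{V_{i\pm2}}$, $z=\ove{V_i}\cdot\ove{V_{i\pm1}}$, $z_+=\ove{V_i}\cdot\ove{V_i}$ playing the roles of your $c_2$, $c_1$, $1$), and it rests on the same two region containments $R_i(u)\subseteq R_i(v)$ and $R_{i-2}(v)\cup R_{i+2}(v)\subseteq R_{i-2}(u)\cup R_{i+2}(u)$. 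Your algebra is correct, and your emphasis on the regular pentagon being what makes both coefficients positive is exactly the point the paper also highlights.

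The genuine gap is that the containments are never established: you present them as ``the plan,'' correctly observe that pairwise acyclicity of $W_{i+2}\cup W_{i-2}^-$ alone cannot rule out the offending crossings and that the local crossing condition (W2) must enter, and then stop (``I expect this non-crossing step to be where the proof does its real work''). Since the entire content of the lemma sits in precisely that step, what you have is an accurate reduction plus a diagnosis of the difficulty, not a proof. For comparison, the paper closes this step with one substantive fact: by Condition (W2), whenever $P_{i-1}(u)$ (resp. $P_{i+1}(u)$) leaves the region $R_i(v)$, it does so immediately after visiting a vertex of $P_{i-2}(v)$ (resp. of $P_{i+2}(v)$); together with the merging of same-color paths this confines the boundary paths of $R_{i-2}(u)\cup R_{i+2}(u)$ and yields the second containment, the first being the easier one via merging of $P_{i\pm2}(u)$ into $P_{i\pm2}(v)$. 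Note also that your strictness step ($v\notin R_i(u)$ because ``$v$ is on the near side of $u$'') is asserted rather than proved; it can be repaired (if $v\in R_i(u)$ too, the containments force $R_i(u)=R_i(v)$ and hence $u=v$), but as written it is another unproved claim. So: right approach, right identification of the crux, crux left undone.
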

\begin{proof}
For each vertex $w$ and each subset $S$ of $\set5$, we use the extended notation $R_S(w)=\cup_{i\in S}R_i(w)$, and $\alpha_S(w)=\sum_{i\in S}\alpha_i(w)$. We then observe the following containment relations (see Figure~\ref{fig:halfplane}): 
\[
R_i(v)\supset R_i(u),\ \ \ R_{i-2,i+2}(v)\subset R_{i-2,i+2}(u). 
\]
The second relation is due to the fact (which follows from Condition~(W2) of 5c-woods) that, whenever $P_{i-1}(u)$ (resp. $P_{i+1}(u)$) 
leaves the region $R_i(v)$, it occurs just after visiting a vertex on $P_{i-2}(v)$ (resp. on $P_{i+2}(v)$). 

Defining $z_-<z<z_+$ as 
\[
z_-=\ove{V_i}\cdot\ove{V_{i-2}}=\ove{V_{i}}\cdot\ove{V_{i+2}},\ \ \ z=\ove{V_{i}}\cdot\ove{V_{i-1}}=\ove{V_{i}}\cdot\ove{V_{i+1}},\ \ \ z_+=\ove{V_{i}}\cdot\ove{V_{i}},
\] 
and letting $r=\alpha_{i-2,i+2}(u)-\alpha_{i-2,i+2}(v)$, $s=\alpha_{i-1,i+1}(u)-\alpha_{i-1,i+1}(v)$, and $t=\alpha_{i}(u)-\alpha_{i}(v)$, we then have 
\begin{align*}
\ove{vu}\cdot\ove{V_i}& = \sum_{k=1}^5(\alpha_k(u)-\alpha_k(v))\ove{V_k}\cdot\ove{V_{i}}\\
& = r\,z_-+s\,z+t\,z_+\\
& = r\,z_-+(-r-t)z+t\,z_+ = r\,(z_--z)+t\,(z_+-z),
\end{align*}
where from the 2nd to 3rd line we use the identity $r+s+t=1-1=0$. 
The above containment relations ensure that $t<0$ and $r>0$, so that 
 $\ove{vu}\cdot\ove{V_i}<0$. \addllncs{\qed}
\end{proof}

\begin{lem}\label{lem:sectors}
Let $G$ be a 5c-triangulation endowed with a 5c-wood. For $v$ an inner vertex of $G$, and for $i\in\set5$, let $u$ be the terminal vertex of the arc of color $i$ from $v$. 
Then, in the 5c-barycentric drawing of $G$, the angle between the vector $\ove{V_i}$ (defined as in Lemma~\ref{lem:halfplane}) and the vector $\ove{vu}$ 
is in the open interval $(-\frac{3\pi}{10},\frac{3\pi}{10})$ (see Figure~\ref{fig:triangle_cw} right).
\end{lem}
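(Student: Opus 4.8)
The plan is to invoke the half-plane property of Lemma~\ref{lem:halfplane} twice, for two carefully chosen colors, and then to intersect the two resulting half-plane constraints using the geometry of the regular pentagon. The key observation is that the terminal vertex $u$ of the arc of color $i$ from $v$ lies \emph{simultaneously} in the two regions $R_{i-2}(v)$ and $R_{i+2}(v)$.

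To establish this, I would first note that $u$ is the second vertex of the directed path $P_i(v)$, since the first arc of $P_i(v)$ is precisely the arc of color $i$ out of $v$. Now $R_{i-2}(v)$ is enclosed by the outer edge $(v_i,v_{i+1})$ together with the paths $P_i(v)$ and $P_{i+1}(v)$, while $R_{i+2}(v)$ is enclosed by the outer edge $(v_{i-1},v_i)$ together with $P_{i-1}(v)$ and $P_i(v)$; in both cases $P_i(v)$ is part of the bounding cycle. Hence $u$, lying on $P_i(v)$ and distinct from $v$, belongs to both $R_{i-2}(v)$ and $R_{i+2}(v)$ (and when $u=v_i$ is an outer vertex this remains valid under the extended convention for $R_j$ at outer vertices). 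Applying Lemma~\ref{lem:halfplane} with the colors $i-2$ and $i+2$ then yields the two strict inequalities
\[
\ove{V_{i-2}}\cdot\ove{vu}<0,\qquad \ove{V_{i+2}}\cdot\ove{vu}<0.
\]

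Each inequality confines the direction of $\ove{vu}$ to an open half-plane. Because $(v_1,\dots,v_5)$ is a regular pentagon, $\ove{V_{i-2}}$ and $\ove{V_{i+2}}$ make angles $\pm\frac{4\pi}{5}$ with $\ove{V_i}$ on opposite sides, and the boundary of each half-plane is the line perpendicular to the corresponding vector. Measuring angles from $\ove{V_i}$, the boundary rays adjacent to $\ove{V_i}$ are therefore at $\pm\bigl(\frac{4\pi}{5}-\frac{\pi}{2}\bigr)=\pm\frac{3\pi}{10}$, one half-plane being the mirror image of the other across the line spanned by $\ove{V_i}$. Their intersection is exactly the open cone of half-angle $\frac{3\pi}{10}$ about $\ove{V_i}$, which is the claimed bound. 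The only point requiring care — and the main (if mild) obstacle — is the first step: justifying that $u$ genuinely lies in \emph{both} regions (i.e.\ on the shared boundary path $P_i(v)$) and that Lemma~\ref{lem:halfplane} applies with strict inequalities in this boundary situation, including the case where $u=v_i$. Once this membership is secured, the pentagon computation is elementary, and the two constraints are already tight, producing exactly the open interval $(-\frac{3\pi}{10},\frac{3\pi}{10})$.
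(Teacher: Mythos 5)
Your proposal is correct and follows exactly the paper's own argument: the paper's proof is precisely the observation that $u\in P_i(v)=R_{i-2}(v)\cap R_{i+2}(v)$, followed by two applications of Lemma~\ref{lem:halfplane} (colors $i-2$ and $i+2$) and the remark that the two inequalities $\ove{vu}\cdot\ove{V_{i-2}}<0$, $\ove{vu}\cdot\ove{V_{i+2}}<0$ are equivalent to the stated cone condition. You merely spell out the region-boundary justification and the pentagon angle computation ($\frac{4\pi}{5}-\frac{\pi}{2}=\frac{3\pi}{10}$) that the paper leaves implicit, and both are handled correctly, including the boundary case $u=v_i$.
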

\begin{proof}
Clearly 
$u\in P_i(v)= R_{i-2}(v) \cap R_{i+2}(v)$. Hence, by Lemma~\ref{lem:halfplane} we have 
$\ove{vu}\cdot \ove{V_{i-2}}<0$ and $\ove{vu}\cdot \ove{V_{i+2}}<0$, which is equivalent to the stated property.  \addllncs{\qed}
\end{proof}

\begin{remark}
Lemmas~\ref{lem:halfplane} and~\ref{lem:sectors} are the analogues of well-known properties of Schnyder drawings for simple triangulations, where $(-\frac{3\pi}{10},\frac{3\pi}{10})$ is to be replaced by 
$(-\frac{\pi}{6},\frac{\pi}{6})$ in Lemma~\ref{lem:sectors}. A difference here is that the property as stated does not immediately guarantee that the outgoing edges of an inner vertex are in clockwise order 
in the drawing, since the cones for adjacent colors overlap. However, as we now show, the property is actually enough to ensure planarity, and thus the cyclic ordering of edges around a vertex is preserved in the drawing. 
\end{remark}

\fig{width=\linewidth}{triangle_cw}{\llncs{The left drawing shows the p}\addllncs{Left: P}ossible cases for the colors of arcs around an inner face with corner labels $1,3,5$. \llncs{The right drawing shows the c}\addllncs{Right: C}onditions 
of Lemma~\ref{lem:sectors} (a sector $S_i$ can only have the arc of color $i$ from the shown vertex, a sector $S_{i,j}$ can only have the arcs of colors in $\{i,j\}$).}

To show that the drawing is planar, by a known topological argument (see e.g.~\cite[Lem.4.4]{CDV:these}), 
it suffices to show that each inner face is \emph{properly represented}, that is, represented as a non-degenerated triangle that is not flipped (indeed, this condition ensures
that the function mapping a point $p$ inside the enclosing pentagon to the number $n(p)$ of triangles covering it is locally constant, hence has to be equal to $1$ everywhere). 
In other words, if the corner labels are $i,i+2,i+3$ in clockwise order around an inner face, we have to show 
that these corners are seen in the same cyclic order in the triangle representing the face. It is clear that the 5 inner faces incident to the 5 outer edges are properly represented, so we can focus on inner faces incident to 3 inner edges. 
Let us treat the case $i=3$ (the other cases can be treated symmetrically), where
the vertices at the corners $3,5,1$ are respectively denoted $u,v,w$. Given the conditions defining corner-labelings, and the bijection between 5c-woods and 5c-labelings, it is 
easy to check that the following holds (see Figure~\ref{fig:triangle_cw} left):
\begin{itemize}
\item
The arc $(v,w)$ has color $2$, and the arc $(w,v)$ has color $4$. 
\item
The arc $(v,u)$ has color $3$ if colored, the arc $(u,v)$ has color 5 if colored, and at least one of these two arcs is colored. 
\item
The arc $(w,u)$ has color $3$ if colored, the arc $(u,w)$ has color 1 if colored, and at least one of these two arcs is colored. 
\end{itemize}

By Lemma~\ref{lem:sectors}, and using the notation of Figure~\ref{fig:triangle_cw} right, the arc $(v,w)$ is in sector $S_2\cup S_{1,2}$ from $v$, and the arc $(w,v)$ is in sector $S_4\cup S_{4,5}$ from $w$ (the fact that $\{v,w\}$ is a straight segment in the drawing excludes sectors $S_{2,3}$ and $S_{3,4}$ respectively). 
If the arc $(v,u)$ is colored (with color $3$), then it is in sector $S_{2,3}\cup S_3\cup S_{3,4}$ from $v$, hence the directed angle from $(v,u)$ to $(v,w)$ 
around $v$ is in the open interval $(0,\pi)$, so that the face $(u,v,w)$ is properly represented. Similarly, if the arc $(w,u)$ is colored (with color $3$), then 
the directed angle from $(w,v)$ to $(w,u)$ around $w$ is in the open interval $(0,\pi)$, so that the face $(u,v,w)$ is properly represented. 
Finally, if none of the arcs $(v,u)$ or $(w,u)$ is colored, then the arc $(u,w)$ has color $1$, and the arc $(u,v)$ has color $5$. 
Assume for contradiction that the face $(u,v,w)$ is not properly represented. Then the directed angle from $(u,w)$ to $(u,v)$ around $u$ is not in the open interval $(0,\pi)$. Given Lemma~\ref{lem:sectors},
these two arcs have to be in $S_{5,1}$ from $u$, that is, the arc $(w,u)$ is in $S_3$ from $w$ and the arc $(v,u)$ is in $S_3$ from $v$. 
But then, the angle from $(w,v)$ to $(w,u)$ around $w$ is in the open interval $(\pi/5,4\pi/5)$, so that the face $(u,v,w)$ is properly represented. This concludes
the proof of planarity in Theorem~\ref{theo:planar}.

\llncs{\subsection{Time complexity} Similarly as for Schnyder's algorithm, the time-complexity of the 5c-barycentric drawing algorithm is linear in the number of vertices. Precisely, for a 5c-triangulation $G$ with $n$ vertices, we have already established in Theorem~\ref{thm:main} that a 5c-wood $\cW=(W_1,\ldots,W_5)$ can be computed in $O(n)$ operations. As we now explain, the regions' sizes $|R_i(v)|$ for all inner vertices $v$ and $i\in\set5$ can also be computed in $O(n)$ operations, hence the total time-complexity of the drawing algorithm is $O(n)$. 
\addllncs{The linear time computation is based on  following observation: for an inner vertex $v$, the vertices strictly inside the region $R_i(v)$ are all the descendants in the tree $W_i$ of the inner vertices on the paths $P_{i-2}(v)$ and $P_{i+2}(v)$ (excluding the vertices on the paths $P_{i-2}(v)$ and $P_{i+2}(v)$). Computing the number of such descendants is possible in linear time, similarly as in the case of Schnyder's algorithm \cite{Schnyder:wood2}.}

\llncs{The computation below is based on the following observation: for an inner vertex $v$, the vertices strictly inside the region $R_i(v)$ are all the descendants in the tree $W_i$ of the inner vertices on the paths $P_{i-2}(v)$ and $P_{i+2}(v)$ (excluding the vertices on the paths $P_{i-2}(v)$ and $P_{i+2}(v)$).
Let $V$ be the set of inner vertices of $G$. For $v\in V$ and $i\in\set5$, let $N_i(v)$ be the number of descendants of $v$ in $W_i$ (with $v$ not considered a descendant of itself). 
The list $\{N_i(v),\ v\in V\}$ can be computed in $O(n)$ operations proceeding from leaves to root, that is, starting from the set $L$ of leaves of $W_i$, then the set of leaves of $W_i\backslash L$, etc. 
For $j\in\{i-2,i+2\}$, we let $N_i^j(v)$ be $N_i(w)+1$ if $v$ has a child $w$ in $W_i$ such that the arc $(v,w)$ has color $j$ (such a child is necessarily unique by Condition (W2) of 5c-woods), and be $0$ otherwise. Clearly the list $\{N_i^j(v),\ v\in V\}$ can be computed in $O(n)$ operations. 
Once all these lists are computed, for $i\in\set5$ and $v\in V$, and with the notation $P_k'(v)=P_k(v)\backslash\{v,v_k\}$, 
 we let $\Nil(v)=\sum_{u\in P_{i-2}'(v)}(N_{i}(u)-N_{i}^{i-2}(u))$, and $\Nir(v)=\sum_{u\in P_{i+2}'(v)}(N_{i}(u)-N_{i}^{i+2}(u))$.    
 The list $\{\Nil(v),\ v\in V\}$ (resp. $\{\Nir(v),\ v\in V\}$) can again be computed in $O(n)$ operations, here proceeding from root to leaves in $W_{i-2}$ (resp. in $W_{i+2}$). 
 Then, for $v\in V$, the above observation implies that the number $n_i(v)$ of vertices strictly inside $R_i(v)$ is 
 \[
 n_i(v)=\big(N_i(v)-N_i^{i-2}(v)-N_i^{i+2}(v)\big)+\Nil(v)+\Nir(v).
 \]
One can also compute the list $\{\mathrm{length}(P_i(v)),\ v\in V\}$ in $O(n)$ operations (proceeding from root to leaves in $W_i$). By the Euler relation the number of faces $|R_i(v)|$ is $2n_i(v)+\mathrm{length}(P_{i-2}(v))+\mathrm{length}(P_{i+2}(v))-1$. Hence the list $\{|R_i(v)|,\ v\in V\}$ can be computed in $O(n)$ operations, as claimed. }

\medskip
}

\subsection{Variations and other properties} We discuss here some variants (weighted faces, vertex-counting) and aesthetic properties of the drawing, and 
end with some open questions.

\medskip

\noindent\textbf{Variants of the embedding algorithm.}
 As in the case of Schnyder's algorithm~\cite{felsner2008schnyder}, one can give a \emph{weighted version} of the 5c-barycentric drawing algorithm. In this version, each inner face is assigned a positive weight, and $\alpha_i(v)$ is the total weight of inner faces in $R_i(v)$, divided by the total weight of inner faces. The proof of Lemma~\ref{lem:sectors} and the proof of planarity extend verbatim.   
Moreover, as in~\cite[Sec.7]{Schnyder:wood2}, a vertex-counting variant can also be given, upon changing $|R_i(v)|$ to be the number of vertices in $R_i(v)\backslash P_{i-2}(v)$,
and using $\alpha_i(v)=|R_i(v)|/(n-1)$, with $n$ the number of vertices of $G$. 
The relevant inequalities to prove Lemma~\ref{lem:halfplane} become, for $u\neq v$ such that $u\in R_i(v)$, 
\[
|R_i(v)|\geq |R_i(u)|,\ \ \ |R_{i-2,i+2}(v)|< |R_{i-2,i+2}(u)|.
\] 
(Note that, even if $R_i(v)\supset R_i(u)$, here we may have $|R_i(v)|= |R_i(u)|$ when $u,v$ are part of a triangular face $(v,u,w)$ such that $u$ is 
the end of the arc of color $i-2$ and $w$ is the end of the arc of color $i+2$ from $v$.) 
Hence, Lemma~\ref{lem:sectors} still holds, which as before implies planarity. 

\medskip

\noindent\textbf{Symmetries.}
Compared to Tutte's spring embedding~\cite{tutte1963draw} 
(and similarly as for Schnyder's drawing), the drawing algorithm (either in its face-counting or vertex-counting version) can display rotational symmetries, but not mirror symmetries. Precisely, there is a canonical 5c-wood corresponding to the \emph{minimal 5c-orientation}, that is, the unique 5c-orientation with no counterclockwise directed cycle. If $G$ is invariant by a rotation of order $5$, then so is the minimal 5c-orientation, and so is the corresponding 5c-wood under a shift of the arc colors (and indices of the outer vertices) by 1. Thus, the drawing obtained using this canonical 5c-wood displays the rotational symmetry. 

\fig{width=\linewidth}{modulus}{The configuration (up to rotation and mirror) of vertex-pairs $(u,v)$ of smallest possible distance in Schnyder's drawing (left) and in our drawing (right).}

\medskip

\noindent\textbf{Quality of the drawing.}
Besides the fact that the family of triangulations is more restricted, an obvious disadvantage of our algorithm compared to Schnyder's one (and also to straight-line drawings using transversal structures~\cite{Fu07b}, or using shelling orders~\cite{FraysseixPP88,chrobak1997convex,he1997grid,MiuraNN01,miura2022grid}) 
is that we can not use an affine transformation to turn the drawing into one on a regular square-grid of linear width and height\footnote{If we use an affine transformation to have $v_1$, $v_2$ and $v_5$ placed at $(0,0)$, $(0,1)$ and $(1,0)$ respectively, then we get a drawing with vertex-coordinates in $\mathbb{Q}(\sqrt{5})$.}. 
Let us however discuss a parameter (vertex resolution) for which our algorithm brings some improvement.  
For $n\geq 1$, we let $\mu_3(n)$ (resp. $\mu_5(n)$) 
denote the smallest possible distance between vertices over Schnyder's drawings on simple triangulations with $n$ vertices (resp. over our 5c-barycentric drawings on 5-connected triangulations with $n$ vertices),  assuming the drawing is normalized to have circumscribed circle of radius $1$. Below we use complex numbers to represent points in the plane.    
Let $\omt=e^{2i\pi/3}$ (resp. $\omf=e^{2i\pi/5}$). For a vector $\bdelta=(\delta_1,\delta_2,\delta_3)\in\mathbb{Z}^3$ (resp. $\bdelta=(\delta_1,\delta_2,\delta_3,\delta_4,\delta_5)\in\mathbb{Z}^5$) whose components add up to $0$, we define the \emph{modulus} $||\bdelta||$ of $\bdelta$ as the modulus $|z|$ 
of the complex number $z=\sum_{i=1}^3\delta_i\omt^i$ (resp. $z=\sum_{i=1}^5\delta_i\omf^i$).  
The vector $\bdelta$ is called \emph{possible} if there is a vertex pair $(u,v)$ in a Schnyder's drawing (resp. in our drawing) such that the number of faces in 
$R_i(u)$ minus the number of faces in $R_i(v)$ equals $\delta_i$ for $i\in\{1,2,3\}$ (resp. for $i \in [1:5]$), in which case the distance between $u$ and $v$ in the 
drawing is $||\bdelta||/(2n-5)$ (resp. is $||\bdelta||/(2n-9)$).  

As shown in Figure~\ref{fig:modulus} (and following an easy case inspection), the possible $\bdelta$ of smallest modulus (up to dihedral permutation of the entries) for Schnyder's drawing is 
 $(-1,2,-1)$, of modulus $d_3=3$, and for our drawing is $(-2,-1,3,1,-1)$, of modulus $d_5\approx 5.97$. 
Thus (for $n$ large enough) we have  $\mu_3(n)=\frac{d_3}{2n-5}$, and  $\mu_5(n)=\frac{d_5}{2n-9}$.  
Hence, the vertices are better kept away from each other in our drawing, the worst-case distance being increased by a factor $d_5/d_3\approx 1.99$ 
(to have a comparison over the same objects, we note that  $\mu_3(n)$ is also attained by drawings of 5-connected triangulations with $n$ vertices, for $n$ large enough).

In the vertex-counting variant, the possible $\bdelta$ of smallest modulus (up to cyclic permutation of the entries) in Schnyder's algorithm is now $(0,1,-1)$, of modulus $d_3'=\sqrt{3}\approx 1.73$, 
and in our drawing is $(-1,-1,1,1,0)$, of modulus $d_5'\approx 3.08$ (these are again attained by the patterns shown in Figure~\ref{fig:modulus}).   
Then we have (for $n$ large enough) $\mu_3(n)=\frac{d_3'}{n-1}$ 
 and $\mu_5(n)=\frac{d_5'}{n-2}$, so that the worst-case distance is increased by a factor $d_5'/d_3'\approx 1.78$ (in both algorithms the vertex-counting variant  brings a slight improvement over the face-counting version).  
 
 \medskip

\noindent \textbf{Open questions:} 
We conclude with some open questions: 
\begin{itemize}
\item[Question 1.] Can 5c-woods, and the 5c-barycentric algorithm, be generalized to 5-connected plane graphs in the spirit of the extension of Schnyder woods, and\llncs{ of the corresponding} drawing algorithm, to 3-connected planar graphs~\cite{bonichon2007convex,di1999output,F01,Miller:FelsnerWoods}?
\item[Question 2.] Can 5c-woods be used to define a graph drawing algorithm for the dual of 5c-triangulations (possibly with bent edges but restrictions on the directions of the edge segments). Such algorithms are known for the dual of triangulations~\cite{felsner2003geodesic,Miller:FelsnerWoods}, the dual of \addllncs{irreducible} triangulations of the 4-gon\llncs{ with no vertex inside triangles}~\cite{He93:reg-edge-labeling}, and the dual of quadrangulations~\cite{OB-EF:Schnyder,Biedl98abetter,Ta87}.
\item[Question 3.] \addllncs{Is there a shelling procedure on 5c-triangulations to output a 5c-wood (perhaps by adapting the 5-canonical decomposition introduced in~\cite{nagai2000linear})?} 
\llncs{Can a shelling procedure be used on 5c-triangulations (perhaps modifying the 5-canonical decomposition introduced in~\cite{nagai2000linear}) to output a 5c-wood?} 
\item[Question 4.] \addllncs{Can 5c-orientations be used (e.g. using the framework of~\cite{OB-EF:girth}) to have a bijective derivation of the generating function of 5-connected 
 triangulations, expressed in~\cite{gao2001counting}?}
\llncs{An algebraic expression for the generating function of 5-connected triangulations was obtained in~\cite{gao2001counting} using a substitution approach.  Can 5c-orientations be used to have a bijective derivation of this expression? As a first step, we are optimistic that the framework of~\cite{OB-EF:girth} can be used to obtain a bijection between 5c-triangulations (via their 5c-orientation with no counterclockwise cycle) and a class of decorated trees\addllncs{.}\llncs{ defined by some local degree conditions.}} 
\item[Question 5.] Is there a nice counting formula for the total number of 5c-woods on 5c-triangulations with $n$ inner vertices? (For Schnyder woods such a formula is $\frac{6(2n)!(2n+2)!}{n!(n+1)!(n+2)!(n+3)!}$~\cite{Bonichon:realizers}.)
\end{itemize}



\bibliographystyle{plain}
\bibliography{biblio-Schnyder}

\addllncswa{
\appendix
\section{Appendix}
In this appendix we prove Lemmas \ref{lem:bij_label_orient}, \ref{lem:bij_label_wood}, and Proposition~\ref{prop:acyclic}. We also complete the proof of Theorem \ref{thm:main} by describing a linear time algorithm for constructing 5c-structures. Lastly, we prove that the complexity of the drawing algorithm is linear in the number of vertices.

\subsection{Proof of Lemma \ref{lem:bij_label_orient}}\label{sec:pf_lem_bij_label_orien}

We start by proving an additional property of 5c-labelings. 

\begin{lem}\label{lem:sum_jumps}
 Let $\cL$ be a 5c-labeling of a 5-triangulation $G$. For any inner edge $e$, the sum of label jumps between consecutive corners in counterclockwise order around $e$ is equal to 5. This is represented in Figure~\ref{fig:jumps_around_edge}.
\end{lem}

\begin{proof}
\llncs{As noted above,}\addllncs{Note that} for any inner vertex or any inner face $x$, the sum $\cwjump(x)$ of label jumps between consecutive corners in clockwise order around $x$ is equal to 5. For an inner edge~$e$ we denote by $\ccwjump(e)$ the sum of label jumps between consecutive corners in counterclockwise order around~$e$. 


Observe that a clockwise label jump around an inner vertex or an inner face is either a counterclockwise label jump around an inner edge, or a label jump along an outer edge (and there are exactly 5 such jumps, each with value $1$). Hence, 
$\ds \sum_{f \in F} \cwjump(f) + \sum_{v \in V} \cwjump(v) = 5 + \sum_{e \in E} \ccwjump(e),$
where $V, E, F$ are the sets of inner vertices, inner faces and inner edges, respectively. The left-hand side is equal to $5(|V|+|F|)$, which in turn is equal to $5+5|E|$ by the Euler relation. Since $\ccwjump(e)$ must be a nonzero (by Condition (L2)) multiple of 5, it has to be exactly 5, for all $e \in E$.\qed
\end{proof}

\fig{width=0.3\linewidth}{jumps_around_edge}{Counterclockwise label jumps around an edge.}

Now we prove Lemma \ref{lem:bij_label_orient}. Let $\cL\in\bL_G$. 
We first show that $\Phi(\cL)$ is a 5c-orientation. Condition (O0) is a direct consequence of Condition (L0). Every inner primal vertex has outdegree 5 by Condition (L1), and every dual vertex has outdegree 2 by Condition (L2). 
Also, Lemma~\ref{lem:sum_jumps} implies that every edge-vertex has outdegree 1. Hence, Condition (O1) holds, and  $\Phi(\cL)$ is a 5c-orientation. 

We now describe the inverse map. 
Let $\mathcal{O}$ be a 5c-orientation. The orientations of edges in $\cO$ dictate the label jumps between consecutive corners around vertices and faces. 
One can then follow these label jumps in order to propagate labels, starting from corners at the outer vertices (which are fixed by Condition (L0)) inward to all corners. If no conflict arises in this propagation (so that all the dictated label jumps are satisfied), then one obtains a labeling of corners $\cL$, which we denote by $\ov{\Phi}(\cO)$. We prove below that, for any 5c-orientation $\cO$, no conflict occurs during the propagation of labels, and that $\ov{\Phi}(\cO)$ is a 5c-labeling.


Consider the \emph{corner graph} $C_G$ of $G$ defined as follows: $C_G$ is a directed graph whose vertices are the inner corners of $G$, and there is an oriented edge from a corner $c$ to a corner $c'$ in $C_G$ if $c'$ is the corner following $c$ in clockwise order around a face or a vertex. A corner graph is represented in Figure~\ref{fig:corner_graph}. $C_G$ is naturally endowed with an embedding which is induced by the embedding of $G$ (in fact $C_G$ is obtained from the dual of $G^+$ by deleting a vertex). In particular, $C_G$ has three types of inner faces, corresponding to inner vertices, inner edges and inner faces of $G$ respectively.

We define the $\cO$\emph{-weight} $w(a)$ of an arc $a = (c,c')$ of $C_G$ to be the label jump from $c$ to $c'$ determined by $\mathcal{O}$ according to the rules given by Figure~\ref{fig:local_rules_5c_llncs}(a). The $\cO$\emph{-weight} $w(P)$ of a directed path $P$ of $C_G$ is the sum of the $\cO$-weight of the arcs of $P$. Note that the propagation rule causes no conflicts if and only if for any two (not necessarily consecutive) corners $c$ and $c'$ and any two directed paths $P_1, P_2$ in $C_G$ from $c$ to $c'$, we have $w(P_1) \equiv w(P_2) \pmod 5$. To show the later property it suffices to show that for any simple cycle $C$ in $C_G$, we have 
$$\sum_{a \in C^+}w(a) - \sum_{a \in C^-}w(a) \equiv 0 \pmod 5,$$ 
where $C^+$ and $C^-$ are the sets of arcs appearing clockwise and counterclockwise on $C$, respectively. It is easy to see that this holds if and only if the $\cO$-weight of the contour of each inner face in $C_G$ is congruent to 0 modulo 5. This last condition is implied by Condition (O1) of 5c-orientations.  
Hence the corner labeling $\cL=\ov{\Phi}(\cO)$ is well-defined. Moreover $\cL$ is indeed a 5c-labeling because Condition (L0) is satisfied by definition, and Conditions (L1) and (L2) are easy consequences of Condition (O1). 



Finally, it is easy to see that $\Phi$ and $\ov{\Phi}$ are inverse to each other, hence they give bijections between 5c-labelings and 5c-orientations of $G$.
This completes the proof of Lemma \ref{lem:bij_label_orient}.

\fig{width=0.8\linewidth}{corner_graph}{A 5-triangulation $G$ and its corner graph $C_G$.}

\subsection{Proof of Lemma \ref{lem:bij_label_wood}}\label{sec:lem_bij_label_wood}
Let $\cL$ be a 5c-labeling of $G$.
 We first show that $\Theta(\cL)$ is a 5c-wood. By (L0), the arcs that start at outer vertices receive no color. 
 Moreover, Conditions~(L1) and~(L2) and Lemma~\ref{lem:sum_jumps} imply that for any inner edge $e=\{u,v\}$ with labels $i$ on each side at $v$, the labels at $u$ must be $i+2$ 
 on the left and $i+3$ on the right, so that the arc $(u,v)$ receives label $i$. 
 This implies that the inner arcs ending at $v_i$ receive color $i$ by $\Theta$. Hence $\Theta(\cL)$ satisfies (W0).

 It is clear that (W1) is a direct consequence of (L1). To verify (W2), consider an inner arc $a=(u,v)$ of color $i\in\set5$, with $v$ an inner vertex.   
 Let $i+2, i+3, j, k$ be the labels in counterclockwise order around $a$, starting with those two incident to $u$.  
 By (L2), $j \in \{i-1,i\}$ and $k \in \{i,i+1\}$. Therefore the opposite arc $-a$ has either no color while being 
 between the arcs of colors $i+2$ and $i+3$ starting from $v$, or it has color $i+2$, or color $i+3$, which proves that the arc $a$ satisfies (W2).

 Lastly, for any inner edge $e=\{u,v\}$,  
 Lemma~\ref{lem:sum_jumps} and Conditions~(L1) and (L2) easily imply that either 1 or 2 arcs of $e$ receive a color (depending on having one or two  
  label jumps equal to $2$ along $e$, giving respectively two or one label jumps equal to 1 across $e$).   
 Therefore $\Theta(\cL)$ satisfies (W3) and is a 5c-wood.

 To prove that $\Theta$ is a bijection, we now describe the inverse map. Given a 5c-wood $\cW$ on $G$, we define a corner labeling $\ov{\Theta}(\cW)$ as follows:
 \begin{itemize}
\item  the inner corners incident to the outer vertex $v_i$ receive label $i$; 
\item a corner incident to an inner vertex $v$ receives label $i$ if it is between the outgoing arcs of colors $i+2$ and $i+3$ in clockwise order around $v$.
\end{itemize}

We need to show that $\ov{\Theta}(\cW)$ is indeed a 5c-labeling. Conditions (L0) and (L1) are clearly satisfied. 
Next we check that any label jump in clockwise direction around a face is equal to~1 or~2. Let $e=\{u,v\}$ be an inner edge of $G$. 
By (W3), one of its two arcs has some color $i$, say the one starting at $u$. Let $i+2, i+3, j, k$ be the labels of the corners in counterclockwise order around $e$, starting
with those two incident to $u$.  
Then (W2) implies that $(j,k) \in \{(i-1,i), (i,i), (i,i+1)\}$. In all cases, the label jumps from $i+3$ to $j$ and from $k$ to $i+2$ are both in $\{1,2\}$. Since the sum of label jumps in clockwise direction around a face is a multiple of 5, and each jump is in $\{1,2\}$, the only possibility is that one label jump is equal to 1 and two label jumps are equal to 2. This shows that $\ov{\Theta}(\cW)$ satisfies (L2), hence is a 5c-wood. 

 Finally, it is clear that the mappings $\Theta$ and $\ov{\Theta}$ are inverse of each other, hence they give bijections between the sets of 5c-labelings and 5c-woods of $G$. This completes the proof of Lemma \ref{lem:bij_label_wood}.

\subsection{Proof of Theorem \ref{thm:main} and construction algorithm}\label{sec:pf_main}
In this subsection we complete the proof of Theorem \ref{thm:main}.
Let us first show that a 5-triangulation $G$ admitting a 5c-orientation is necessarily a 5c-triangulation. If $G$ admits a 5c-orientation, it also admits a 5c-wood $\cW=(W_1,\ldots,W_5)$. For an inner vertex $v$ of $G$, we consider the paths $P_i(v)$ in $W_i$ from $v$ to $v_i$, for $i \in [1:5]$. 
As we have seen in Section~\ref{sec:paths_regions} (up to the forthcoming proof of Proposition~\ref{prop:acyclic}), the paths $P_1(v),\ldots,P_5(v)$ have no vertex in common except~$v$.  Hence $v$ cannot be in the interior of a simple cycle of length less than 5. This proves that $G$ is a 5c-triangulation.

In the rest of this section, we describe a linear time algorithm for computing a 5c-orientation of a 5c-triangulation $G$. The process is illustrated in Figure~\ref{fig:construction}, it uses  some facts about \emph{regular edge labelings}, which are combinatorial structures for triangulations of the 4-gon first defined by He in \cite{He93:reg-edge-labeling}, and rediscovered in \cite{Fu07b} under the name of \emph{transversal structures}. We will rely on an incarnation of these structures as certain outdegree-constrained orientations.  
Let $H$ be a triangulation of the 4-gon, and let $\Hd$ be the map obtained by placing a \emph{face-vertex} in each inner face of $H$ and joining it to the three vertices incident to this face (and then erasing the original edges of $H$). 

A \emph{regular orientation} of $H$ is an orientation of $\Hd$ such that every face-vertex of $\Hd$ has outdegree 1 and every original inner vertex of $H$ has outdegree 4; an example is shown in Figure~\ref{fig:construction}(b). It follows from \cite{He93:reg-edge-labeling,Fu07b} that a triangulation of the 4-gon $H$ admits a regular orientation if and only if $H$ has no cycle of length less than 4 containing a vertex in its interior (equivalently, it has no loop nor multiple edges, and every 3-cycle is the boundary of a face). Moreover, such an orientation can be computed in linear time~\cite{KantHe97:reg-edge-labeling-linear} (an independent proof is given in \cite{OB-EF-SL:Grand-Schnyder}).


\fig{width=\linewidth}{construction}{Construction of a 5c-orientation for a 5c-triangulation. (a) A 5c-triangulation $G$. (b) The triangulation of the $4$-gon $H$ (obtained from $G$ by placing a vertex $v_0$ in the outer face of $G$), and the regular orientation $\cA$ of $\Hd$. (c) The orientation $\cB$ of $\Gd$. (d) The spanning tree $T$ (indicated by bold lines) oriented from the root vertex $v_*$ to the leaves, and the bijection $\varphi$ between the set $S$ of inner faces of $\Gd$ and set $\ov T$ of edges not in $T$: for each edge $e$ in $\ov T$, the corresponding face $\varphi(e)$ is indicated by a shaded triangle inside of $\varphi(e)$ and incident to $e$. (e) The rule (inside each inner face $s$ of $\Gd$) for computing the orientation $\cO$ of the primal-dual completion $G^+$. (f) The resulting 5c-orientation $\cO$ of $G^+$.}

Let $G$ be a 5c-triangulation. Let $H$ be the triangulation of the 4-gon obtained from $G$ by placing a new vertex $v_0$ in the outer face of $G$ and joining $v_0$ to $v_1$, $v_2$, $v_3$ and $v_4$. 
Since $G$ is a 5c-triangulation, $H$ has no cycle of length less than 4 containing a vertex in its interior. Hence, as recalled above, $H$ admits a regular orientation $\cA$, which can be computed in linear time. The situation is represented in Figure~\ref{fig:construction}(a-b).
Let $\Gd$ be the subgraph of $\Hd$ obtained by deleting the vertices and edges of $\Hd$ in the outer face of $G$. 
Consider the restriction $\cA'$ of $\cA$ to $\Gd$. Let $b_i$ be the face-vertex of $\Gd$ in the inner face of $G$ incident to the outer edge $\{v_i,v_{i+1}\}$ for $i\in [1:5]$. By definition, the face-vertex $b_i$ has outdegree 1 in $\cA'$, and we can reorient one of the edges $\{b_i,v_i\}$ or $\{b_i,v_{i+1}\}$ to obtain an orientation $\cB$ of $\Gd$ such that 
\begin{compactitem}
\item for all $i\in [1:5]$, the face-vertex $b_i$ has outdegree 2, 
\item the face-vertices of $\Gd$ distinct from $b_1,\ldots,b_5$ have outdegree 1, 
\item the original inner vertices of $G$ have outdegree 4.
\end{compactitem}
The orientation $\cB$ of $\Gd$ is represented in Figure~\ref{fig:construction}(c).

It is easy to check (using the Euler relation), that in $\cB$ there is a unique oriented edge $e_*$ of $\Gd$ whose initial vertex is an outer vertex of $G$. Let $v_*$ be the initial vertex of $e_*$.
One can show that the orientation $\cB$ is \emph{accessible from $v_*$}, that is, for every vertex of $v$ of $\Gd$ there is a directed path from $v_*$ to $v$ in $\cB$. We leave the proof of this accessibility property to the reader, and only mention that it relies on the fact that $G$ has no cycle of length less than 5 containing a vertex in its interior. Since $\cB$ is accessible from $v_*$, there exists a spanning tree $T$ of $\Gd$ rooted at $v_*$ and such that every edge of $T$ is oriented from parent to children in~$\cB$. The spanning tree~$T$ is represented in Figure~\ref{fig:construction}(d).

Next we use $\cB$ and $T$ to define an orientation $\cO$ of the primal-dual completion $G^+$ of $G$. Let $S$ be the set of inner faces of $\Gd$. By definition, $S$ is in bijection with the inner edges of $G$, and the primal-dual completion $G^+$ of $G$ is obtained by: 
\begin{compactitem}
\item adding an edge-vertex $x_s$ in each inner face $s\in S$ and joining it to the 4 incident vertices of $\Gd$,
\item adding an edge-vertex $x_i$ for all $i \in [1:5]$ and joining it to $v_i$, $v_{i+1}$ and $b_i$.
\end{compactitem}

Let $\ov T$ be the set of edges of $\Gd$ not in $T$. It is classical (for any spanning tree $T$ of a plane graph) that $\ov T$ is in bijection with the set $S$ of inner face of $\Gd$, where the bijection $\varphi$ associates to an edge $e\in \ov T$ the face $\varphi(e)\in S$ incident to $e$ and enclosed by the unique cycle in $T\cup \{e\}$. The bijection $\varphi$ is indicated in Figure~\ref{fig:construction}(d).
For $s\in S$ we denote by $t_s$ the terminal vertex of the edge $e\in \ov T$ such that $\varphi(e)=s$. We now orient $G^+$ by the following rule illustrated in Figure~\ref{fig:construction}(e):
\begin{compactitem}
\item for each face $s\in S$, the edge $(x_s,t_s)$ is oriented toward $t_s$, while the 3 other edges incident to $x_s$ are oriented toward $x_s$,
\item for each $i \in [1:5]$, the edge $(x_i,b_i)$ is oriented toward $b_i$.
\end{compactitem}

The resulting orientation $\cO$ of $G^+$ is indicated in Figure~\ref{fig:construction}(f). 
One can check that $\cO$ is a 5c-orientation of $G$: each edge-vertex has outdegree 1, each face-vertex has outdegree 2, and each inner original vertex has outdegree $5$ (we leave the proof to the reader). 
Moreover, the operations required to go from the regular orientation $\cA$ to the 5c-orientation $\cO$ can clearly be performed in linear time. This concludes the proof of Theorem~\ref{thm:main}.

\subsection{Proof of Proposition \ref{prop:acyclic}}\label{sec:pf_prop_acyclic}

Suppose for contradiction that $\cO_i$ has a simple directed cycle. We put a partial order $\prec$ on the set of simple directed cycles of $\cO_i$, by declaring $C\prec C'$ if the region enclosed by $C$ is contained in the region enclosed by $C'$. Let $C$ be a directed cycle of $\cO_i$ which is minimal for this partial order.

Suppose first that there is a vertex $v_0$ in the region enclosed by $C$. For $j \in [1:5]$, let $P_j$ be the directed path starting at $v_0$ and made of the arcs in $W_j$. This directed path $P_j$ either ends at the outer vertex $v_j$ or contains a directed cycle, but in either case it does not stay within the region enclosed by $C$ (by minimality of $C$). Hence $P_{j}$ contains a directed path $P_j'$ from $v_0$ to a vertex of $C$. 
Hence $\cO_i$ contains a directed path (going through $v_0$) from a vertex of $C$ to a vertex of $C$: for instance, take the opposite of the path $P_{i-2}'$ concatenated with the path $P_i'$. This contradicts the minimality of $C$, hence \llncs{we conclude that}there is no vertex in the region enclosed by $C$. 

Thus, still by minimality of $C$, there cannot be any edge enclosed by $C$ (since every inner edge is oriented at least 1 way in $\cO_i$). Hence $C$ must be the contour of an inner face $f$ of $G$. If $C$ is a clockwise cycle, it is easy to see that the corners of $f$ are labeled either $i-1$ or $i-2$ in the 5c-labeling $\ov \Theta(\cW)$ (because the corner at a vertex $v$ of $f$ must be incident to the arc of $W_{i+1}$ with initial vertex $v$). But this is impossible since the 3 corners of~$f$ must have distinct labels in the 5c-labeling $\ov \Theta(\cW)$. Similarly, if $C$ is a counterclockwise cycle, then the corners of~$f$ are all labeled either $i+1$ or $i+2$ in $\ov \Theta(\cW)$, which is impossible. 

This completes the proof that $\cO_i$ is acyclic. The second statement in Proposition \ref{prop:acyclic} follows immediately, which completes the proof.

\subsection{Time complexity}\label{appendix-time-complexity}
In this subsection we explain how the 5c-barycentric drawing algorithm can be performed in a number of operations which is linear in the number of vertices. The proof follows the same line as for Schnyder's classical algorithm.

First recall from Theorem~\ref{thm:main} that for a 5c-triangulation $G$ with $n$ vertices, a 5c-wood $\cW=(W_1,\ldots,W_5)$ can be computed in $O(n)$ operations. As we now explain, the regions' sizes $|R_i(v)|$ for all inner vertices $v$ and $i\in\set5$ can also be computed in $O(n)$ operations, hence the total time-complexity of the drawing algorithm is $O(n)$. 

The computation below is based on the following observation: for an inner vertex $v$, the vertices strictly inside the region $R_i(v)$ are all the descendants in the tree $W_i$ of the inner vertices on the paths $P_{i-2}(v)$ and $P_{i+2}(v)$ (excluding the vertices on the paths $P_{i-2}(v)$ and $P_{i+2}(v)$).
Let $V$ be the set of inner vertices of $G$. For $v\in V$ and $i\in\set5$, let $N_i(v)$ be the number of descendants of $v$ in $W_i$ (with $v$ not considered a descendant of itself). 
The list $\{N_i(v),\ v\in V\}$ can be computed in $O(n)$ operations proceeding from leaves to root, that is, starting from the set $L$ of leaves of $W_i$, then the set of leaves of $W_i\backslash L$, etc. 
For $j\in\{i-2,i+2\}$, we let $N_i^j(v)$ be $N_i(w)+1$ if $v$ has a child $w$ in $W_i$ such that the arc $(v,w)$ has color $j$ (such a child is necessarily unique by Condition (W2) of 5c-woods), and be $0$ otherwise. Clearly the list $\{N_i^j(v),\ v\in V\}$ can be computed in $O(n)$ operations. 
Once all these lists are computed, for $i\in\set5$ and $v\in V$, and with the notation $P_k'(v)=P_k(v)\backslash\{v,v_k\}$, 
 we let $\Nil(v)=\sum_{u\in P_{i-2}'(v)}(N_{i}(u)-N_{i}^{i-2}(u))$, and $\Nir(v)=\sum_{u\in P_{i+2}'(v)}(N_{i}(u)-N_{i}^{i+2}(u))$.    
 The list $\{\Nil(v),\ v\in V\}$ (resp. $\{\Nir(v),\ v\in V\}$) can again be computed in $O(n)$ operations, here proceeding from root to leaves in $W_{i-2}$ (resp. in $W_{i+2}$). 
 Then, for $v\in V$, the above observation implies that the number $n_i(v)$ of vertices strictly inside $R_i(v)$ is 
 \[
 n_i(v)=\big(N_i(v)-N_i^{i-2}(v)-N_i^{i+2}(v)\big)+\Nil(v)+\Nir(v).
 \]
One can also compute the list $\{\mathrm{length}(P_i(v)),\ v\in V\}$ in $O(n)$ operations (proceeding from root to leaves in $W_i$). By the Euler relation the number of faces $|R_i(v)|$ is $2n_i(v)+\mathrm{length}(P_{i-2}(v))+\mathrm{length}(P_{i+2}(v))-1$. Hence the list $\{|R_i(v)|,\ v\in V\}$ can be computed in $O(n)$ operations, as claimed.

}

\end{document}